\numberwithin{theorem}{section}
\DeclareMathOperator{\singsupp}{sing-supp}
\newcommand{\rd}{F}
\newcommand{\trd}{\widetilde{F}}
\DeclareMathOperator{\Tr}{Tr}
\DeclareMathOperator{\F}{\mathcal{F}}
\newcommand{\Sph}{\mathbb{S}}
\newcommand{\symb}{\Gamma}
\newcommand{\symbcl}{\Gamma_{\mathrm{cl}}}
\newcommand{\calc}{G}
\newcommand{\calccl}{G_{\mathrm{cl}}}
\newcommand{\sob}{H_{\mathrm{iso}}}
\newcommand{\WFiso}{\WF_{\mathrm{iso}}}
\newcommand{\hamvf}{\mathsf{H}}
\newcommand{\schwartz}{\mathcal{S}}
\newcommand{\pbis}{\mathsf{p}}
\newcommand{\cl}{\mathrm{cl}}
\newcommand{\Xray}{\mathsf{X}}
\newcommand{\tprop}{\widetilde{U}}
\newcounter{StepCounter}[theorem]
\newcommand{\step}[1]{\stepcounter{StepCounter} {\bf Step \arabic{StepCounter}: #1: }}
\title[Refined Weyl law]{Refined Weyl law for homogeneous perturbations of the
  harmonic oscillator}
\author[M. Doll]{Moritz Doll}
\address{Institut für Analysis, Leibniz Universität Hannover, Welfengarten 1, \newline\indent D-30167 Hannover, Germany}
\email{doll[AT]math.uni-hannover.de}
\author[O. Gannot]{Oran Gannot}
\address{Department of Mathematics, Northwestern University, 2033 Sheridan Rd., \newline\indent Evanston IL 60208, USA}
\email{gannot[AT]northwestern.edu}
\author[J. Wunsch]{Jared Wunsch}
\address{Department of Mathematics, Northwestern University, 2033 Sheridan Rd., \newline\indent Evanston IL 60208, USA}
\email{jwunsch[AT]math.northwestern.edu}
\thanks{
The first author was supported by the DFG GRK-1463 and would like to thank Northwestern University for its hospitality.
The second and third authors gratefully acknowledge the support of NSF grants DMS-1502632 and DMS-1600023, respectively.}
\begin{document}

\begin{abstract}
Let $H$ denote the harmonic oscillator Hamiltonian on $\mathbb{R}^d,$
perturbed by an isotropic pseudodifferential operator of order $1.$
We consider the Schr\"odinger propagator $U(t)=e^{-itH},$ and find
that while $\singsupp \Tr U(t) \subset 2 \pi \mathbb{Z}$ as in the unperturbed case, there exists a large class of perturbations in
dimension $d \geq 2$ for which the singularities of $\Tr U(t)$ at nonzero
multiples of $2 \pi$ are weaker than the singularity at $t=0$.  
The remainder term in the Weyl law is of order $o(\lambda^{d-1})$, improving in these cases the $O(\lambda^{d-1})$ remainder previously established by Helffer--Robert.
\end{abstract}

\maketitle

\section{Introduction}

\subsection{Main results} \label{section:mainresults}
Let $H_0=\frac 12 (\Lap+\smallabs{x}^2)$ denote the isotropic harmonic
oscillator on $\RR^d$, where $\Lap$ is the non-negative Laplacian. Thus $H_0$ is the Weyl quantization $H_0 = \Op_W(p_2)$, where $p_2 = (1/2)(|x|^2 + |\xi|^2)$. Consider a perturbation
\[
 H=\Op_W(p),
\]
where $p$ differs from $p_2$ by a classical \emph{isotropic $1$-symbol}. In other words, $p$ admits an asymptotic expansion
\begin{equation} \label{eq:pasymptotic}
    p \sim p_2 + p_1 +p_0+\dots,
\end{equation}
where each $p_j$ is homogeneous of degree $j$ jointly in $(x,\xi)$. Furthermore, assume that $p$ is real valued, hence $H^*=H$ by properties of the Weyl calculus.

Since $p_2(x,\xi) > 0$ for $(x,\xi) \not = 0$,
the resolvent of $H$ is compact and $H$ has discrete spectrum
\begin{align*}
   \lambda_1 \leq \lambda_2 \leq \dots \to +\infty,
\end{align*}
where each eigenvalue is listed with multiplicity.
Let $E_\lambda$ denote the corresponding spectral projector onto $(-\infty,\lambda]$, so if $N(\lambda) = \sum_{\lambda_j \leq \lambda} 1$ is the counting function, then $N(\lambda) = \Tr E_\lambda$. Moreover, 
the Fourier transform of the spectral measure satisfies $U(t) = \F_{\lambda \to t} dE_\lambda$, where $U(t)$ is the propagator for the time-dependent Schrödinger equation
\begin{equation*}
\left\{\begin{aligned}
(i\pa_t - H)U(t) &= 0\\
U(0) &= \id.
\end{aligned}\right.
\end{equation*}
This implies that
\begin{equation} \label{eq:Ntrace}
    \F_{\lambda \to t} N'(\lambda) = \Tr U(t),
\end{equation}
where the trace of $U(t)$ is defined as a tempered distribution (cf. \cite{DuGu75} and \S \ref{subsect:traces}). It is clear from \eqref{eq:Ntrace} that there is a relationship between the singularities of $\Tr U(t)$ and the growth of $N(\lambda)$ as $\lambda \rightarrow \infty$. A proof of the following Poisson relation can be found in
\cite{HeRo82}, but we will give a short and simple proof in the
special case of interest here:

\begin{proposition}\label{prop:wavetrace}
	Singularities of the Schr\"odinger trace $\Tr U(t)$ satisfy 
	\[
	\singsupp \Tr U(t) \subset 2 \pi \ZZ.
	\]
\end{proposition}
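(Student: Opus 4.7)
The plan exploits the exact $2\pi$-periodicity of the unperturbed propagator $U_0(t) := e^{-itH_0}$: since $H_0$ has spectrum $\{k + d/2 : k \in \ZZ_{\geq 0}\}$, we have the scalar identity $U_0(2\pi) = e^{-i\pi d}\,\id$. Because $p_2$ is quadratic, Egorov's theorem in the isotropic calculus is exact: for any $A = \Op_W(a)$ one has $e^{itH_0}\, A\, e^{-itH_0} = \Op_W(a \circ R_{-t})$, where $R_t$ denotes rotation by angle $t$ in the phase space $\RR^{2d}$. Writing $V := H - H_0$ (isotropic of order $1$), the conjugated family $V(t) := e^{itH_0}\, V\, e^{-itH_0}$ is a smooth $2\pi$-periodic family of isotropic pseudodifferential operators of order $1$. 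Define $W(t) := e^{itH_0}\, U(t)$, which satisfies the operator ODE $iW'(t) = V(t)\, W(t)$ with $W(0) = \id$; Picard iteration within the isotropic calculus then produces $W(t)$ as a smooth family of isotropic pseudodifferential operators of order $0$, bounded on $L^2$.

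Factor $U(t) = U_0(t)\, W(t)$ and use the Weyl trace pairing to express $\Tr U(t)$ as a tempered distribution in $t$,
$$\Tr U(t) = (2\pi)^{-d} \int_{\RR^{2d}} \sigma_W(U_0(t))(x,\xi)\, \sigma_W(W(t))(x,\xi)\, dx\, d\xi.$$
Mehler's formula gives $\sigma_W(U_0(t))(x,\xi) = \cos(t/2)^{-d}\, \exp\bigl(-2i \tan(t/2)\, p_2(x,\xi)\bigr)$ for $t \notin \pi + 2\pi\ZZ$, so the integral becomes a Gaussian-type oscillatory integral with nondegenerate quadratic phase $\tan(t/2)\, p_2$ and a bounded, smoothly $t$-varying amplitude. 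Stationary phase at the unique critical point $(0,0)$ then shows that the integral depends smoothly on $t$ throughout each connected component of $\RR \setminus \pi \ZZ$. The apparent degeneracy at the metaplectic points $t \in \pi + 2\pi \ZZ$, where the Weyl symbol of $U_0(t)$ blows up, is an artifact of the formula rather than a true singularity of $\Tr U(t)$: this is handled by factoring $U_0(t) = U_0(t - \pi)\, U_0(\pi)$, absorbing the phase-shifted parity operator $U_0(\pi)$ into $W$, and applying the same stationary phase analysis to the resulting small-time factor.

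The main technical obstacle is the second step, namely establishing that $W(t)$ is a smooth family of isotropic pseudodifferential operators of order $0$ with symbol estimates uniform on compact $t$-intervals. This rests on Egorov's theorem in the isotropic calculus (to identify $V(t)$ in the calculus) together with an ODE/Picard iteration argument at the level of Weyl symbols. Once this is in place, the trace analysis reduces to a direct stationary phase computation, and the only additional care needed is the bookkeeping at the metaplectic times $t \in \pi + 2\pi\ZZ$ where the standard Mehler symbol representation of $U_0(t)$ degenerates.
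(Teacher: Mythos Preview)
There is a genuine gap in your second step: $W(t)$ is \emph{not} an isotropic pseudodifferential operator of order~$0$, and Picard iteration cannot produce it as one. The perturbation $V(t)$ has isotropic order~$1$, so the $k$-th term of the Dyson series is a product of $k$ operators of order~$1$ and hence has order~$k$; the series does not converge in any fixed symbol class $\symb^m$. Concretely, take $V=x_1$ (multiplication), so $W(t)=e^{-itx_1}$; its Weyl symbol $e^{-itx_1}$ lies in $\symb^0_{0,0}$ but not in $\symb^0$, since differentiation gains no decay. The correct structural description of $W(t)=\rd(t)$ is the one the paper establishes in Lemma~\ref{lem:parametrix-perturbed}: an oscillatory integral with phase $\langle x-y,\xi\rangle+\phi_1(t,x,\xi)$ where $\phi_1$ is homogeneous of degree~$1$. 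The factor $e^{i\phi_1}$ cannot be absorbed into a $\symb^0$ amplitude, which is exactly why the later trace analysis (\S\ref{sec:nonhomog-phase}) requires a careful two-scale stationary phase rather than a one-line Gaussian integral.

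Even if you weaken your claim to ``the Weyl symbol of $W(t)$ is bounded with all derivatives bounded,'' the remaining steps need more work than you indicate: the Weyl trace pairing $\Tr(AB)=(2\pi)^{-d}\int\sigma_W(A)\sigma_W(B)$ is a formal identity that must be justified here (for fixed $t$, $U(t)$ is unitary, not trace class, and the integrand is not in $L^1$), and ``stationary phase at $(0,0)$'' with a non-decaying, non-compactly-supported amplitude is not a standard lemma. By contrast, the paper's proof avoids the parametrix entirely: a soft energy/commutator argument (Lemma~\ref{lemma:nonEgorov}, using crucially that $[\calc^k,\calc^1]\subset\calc^{k-1}$ in the isotropic calculus) shows $\rd(t)$ does not move $\WFiso$, so $U(t)$ propagates $\WFiso$ by the free flow $\exp(t\hamvf_0)$. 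For $t\notin 2\pi\ZZ$ one then chooses a microlocal partition of unity $\{A_j^2\}$ with $\WF'(A_j)$ disjoint from its image under $\exp(t\hamvf_0)$, so each $A_jU(t)A_j$ is smoothing and the trace is smooth.
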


Let $\hamvf_0$ denote the Hamilton vector field of $p_2=(1/2)
(|x|^2+|\xi|^2)$, whose flow $(x(t),\xi(t)) = \exp(t \hamvf_0)(x_0,\xi_0)$ satisfies
\begin{align*}
	x(t) &= \cos(t)x_0 + \sin(t)\xi_0,\\
	\xi(t) &= \cos(t)\xi_0 - \sin(t)x_0.
\end{align*}
Given a function $f \in \CI(\RR^{2d-1})$, let $\Xray f$ denote\footnote{This is a kind of X-ray transform, hence the notation.} the average of $f$ over one period of the flow,
\begin{equation} \label{eq:Xray}
\Xray f(x,\xi)=\int_0^{2\pi} f(\exp(t\hamvf_0)(x,\xi))\, dt.
\end{equation}
When restricted to the sphere, $\Xray f$ can also be viewed as the average of $f$ over the fibers of the complex Hopf fibration $\Sph^{2d-1} \to \mathbb{CP}^{d-1}$. Indeed, consider the map
\[
(x,\xi) \mapsto x+ i\xi,
\]
which identifies $\RR^{2d}$ with $\CC^d$. This map intertwines the action of $\exp(t\hamvf_0)$ with complex rotations $z \mapsto e^{-it} z$, and by restriction to $\Sph^{2d-1}$ the latter action induces the complex Hopf fibration $\Sph^{2d-1} \to \mathbb{CP}^{d-1}$ with fiber $\Sph^1$.

The following theorem, which constitutes the main result of this
paper, shows that the singularities of $\Tr U(t)$ at
nonzero times, and hence also the remainder term in the Weyl law,
depend on properties of $\Xray p_1$  (recall from
\eqref{eq:pasymptotic} that $p_1$ is the subprincipal symbol of $H$).

\begin{theorem}\label{maintheorem}
Assume that, when restricted to $\Sph^{2d-1}$, the set where
$\nabla \Xray p_1$ vanishes to infinite order has measure zero.
If $\chi\in \mathcal{C}_c^\infty((-2\pi,2\pi))$, then for
all $n \in \ZZ \backslash \{0\},$
\begin{equation} \label{eq:infiniteorder}
    \F^{-1}_{t\to \lambda} \chi(t-2\pi n) \Tr U(t)=o(\lambda^{d-1}).
\end{equation}
If $\Xray p_1$ is Morse--Bott on $\Sph^{2d-1}$ with
$k>0$ nondegenerate directions, then 
\begin{equation} \label{eq:morsebott}
    \F^{-1}_{t\to \lambda} \chi(t-2\pi n) \Tr
    U(t)=O(\lambda^{d-1-k/4}).
\end{equation}
In either of the cases considered above, there holds the Weyl formula
    \begin{equation} \label{eq:twotermweyl}
        N(\lambda) = (2\pi)^{-d}\int_{\{p_2 + p_1 \leq \lambda\}} dxd\eta  - (2\pi)^{-d}\int_{\{p_2 = \lambda\} } p_0(x,\eta) \frac{dS}{|\nabla p_2|} + o(\lambda^{d-1}).
    \end{equation}
\end{theorem}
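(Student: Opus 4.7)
The plan is to apply a Tauberian argument to $N(\lambda)$ through the Fourier identity $\F_{\lambda \to t} N'(\lambda) = \Tr U(t)$. Since Proposition \ref{prop:wavetrace} confines $\singsupp \Tr U(t)$ to $2\pi\ZZ$, the Fourier transform of $\chi_0(t) \Tr U(t)$ for a cutoff $\chi_0$ supported near $0$ produces by a Mehler-type computation the two leading Weyl terms on the right of \eqref{eq:twotermweyl}, while the contributions from other multiples of $2\pi$ must be controlled one singularity at a time. The heart of the matter is therefore the estimates \eqref{eq:infiniteorder} and \eqref{eq:morsebott}; combined with the a priori Weyl bound $N(\lambda) = O(\lambda^d)$ from Helffer--Robert, a Tauberian theorem of Br\"uning type then yields \eqref{eq:twotermweyl}.

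To treat the singularity at $t = 2\pi n$, I would carry out a Birkhoff--Weinstein normal form. Because the flow of $\hamvf_0$ is $2\pi$-periodic, the cohomological equation $\{p_2, a\} = f - (2\pi)^{-1}\Xray f$ is solvable in the isotropic symbol class by averaging along $\exp(t\hamvf_0)$, the obstruction being precisely the flow-average $(2\pi)^{-1}\Xray f$. Iteratively conjugating $H$ by unitaries $\exp(iA_j)$ with self-adjoint isotropic $A_j$ of successively lower orders, chosen to eliminate the non-averaged part of each next subprincipal symbol, produces a unitary $V$ for which $\tilde H := V^* H V = H_0 + Q$ modulo smoothing, where $Q$ is an isotropic pseudodifferential operator of order $1$ whose complete symbol is $\hamvf_0$-invariant and whose principal part is $q_1 = (2\pi)^{-1}\Xray p_1$. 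A final averaging step promotes this symbol-level invariance to commutation at the operator level, so $[H_0, Q]$ is smoothing. Since $\Tr$ is conjugation-invariant, I may replace $H$ by $\tilde H$ throughout.

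The near-commutativity grants the factorization $e^{-is\tilde H} = e^{-isH_0}\, e^{-isQ}$ modulo smoothing, which together with the identity $e^{-2\pi i H_0} = e^{-i\pi d}\,\mathrm{Id}$ (from the spectrum $\{|\alpha| + d/2\}$ of $H_0$) yields
\[
    U(s + 2\pi n) = e^{-i\pi n d}\, e^{-isH_0}\, e^{-i(s+2\pi n)Q} \quad \text{modulo smoothing}.
\]
Substituting $t = s + 2\pi n$ in the left side of \eqref{eq:infiniteorder}, expressing the trace as a phase-space integral via Weyl symbols, and applying stationary phase in $s$ to localize to $\{p_2 = \lambda\}$ — a sphere in $\RR^{2d}$ of radius $\sqrt{2\lambda}$ — the quantity \eqref{eq:infiniteorder} reduces to leading order to
\[
    c_d\, \lambda^{d-1} \int_{\Sph^{2d-1}} e^{-in\sqrt{2\lambda}\, \Xray p_1(\omega)}\, a(\omega; \lambda)\, dS(\omega),
\]
with an amplitude $a$ admitting a classical expansion in negative half-powers of $\lambda$; the large parameter $\sqrt{2\lambda}$ appears via the degree-one homogeneity of $\Xray p_1$.

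It remains to decay-estimate this spherical oscillatory integral with phase $-n\Xray p_1$ and large parameter $\mu = n\sqrt{2\lambda}$. In the Morse--Bott case, standard stationary phase on the clean critical set with $k$ transverse Gaussian directions gives decay $O(\mu^{-k/2}) = O(\lambda^{-k/4})$, proving \eqref{eq:morsebott}. If instead the infinite-order vanishing locus of $\nabla \Xray p_1$ is null on $\Sph^{2d-1}$, then around each $\omega_0$ outside this locus there is a finite order of vanishing permitting a van der Corput type local decay estimate; a partition of unity, combined with dominated convergence over the null infinite-order-vanishing set, yields $\int e^{-i\mu \Xray p_1} a\, dS = o(1)$ as $\mu\to\infty$, proving \eqref{eq:infiniteorder}. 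The principal obstacle lies in this last case: the finite-order vanishing locus of $\nabla\Xray p_1$ may be large, precluding a uniform stationary phase bound, so one must extract the $o(1)$ decay from only the null-measure hypothesis on the \emph{infinite}-order vanishing set via a careful local-global argument — the key analytic input that distinguishes this refined Weyl law from the classical $O(\lambda^{d-1})$ remainder of Helffer--Robert.
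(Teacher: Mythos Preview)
Your proposal is correct in outline but takes a genuinely different route from the paper. The paper makes no use of a Birkhoff--Weinstein normal form. Instead it constructs an explicit oscillatory-integral parametrix for the reduced propagator $F(t)=U_0(-t)U(t)$ (Lemma~\ref{lem:parametrix-perturbed}), with phase $\left<x-y,\xi\right>+\phi_1(t,x,\xi)$ where $\phi_1(t,\bullet)=-\int_0^t p_1\circ\exp(s\hamvf_0)\,ds$, and then composes with Mehler's formula for $U_0(t)$ (Corollary~\ref{cor:parametrix}). The trace integral is then analyzed by a two-step stationary phase (Proposition~\ref{prop:osc-int}): after rescaling $r\mapsto\lambda^{1/2}r$ one applies stationary phase in $(t,r)$ with the degree-one phase treated as a perturbation carrying a small parameter $\mu=\lambda^{-1/2}$, and the output is exactly the spherical integral you write down. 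The endgame (weak stationary phase for the measure-zero hypothesis, Morse--Bott stationary phase for the second case) and the Tauberian deduction of \eqref{eq:twotermweyl} from Proposition~\ref{prop:leading-asymptotic} match your sketch.

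Your averaging route is legitimate and more conceptual, but be aware that it does not bypass the main analytic step: the passage you label ``expressing the trace as a phase-space integral via Weyl symbols'' still requires a parametrix for $e^{-i\tau Q}$, and since $Q\in\calc^1$ the naive Weyl symbol $e^{-i\tau q_1}$ is not an isotropic symbol (the degree-one exponent destroys the symbol estimates). One must therefore carry out essentially the same oscillatory-integral construction as the paper's Lemma~\ref{lem:parametrix-perturbed}, just with the time-independent generator $Q$ in place of $P(t)$. What the normal form buys you is that the perturbation is time-independent and exactly flow-invariant, which makes the reduction to the $\Sph^{2d-1}$ integral cleaner; what the paper's direct approach buys is economy, since it skips the iterated conjugation and the operator-averaging step and goes straight to the parametrix.
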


Observe that $\Xray p_1$ is never Morse since it is constant along the integral curves of $\hamvf_0$. On the other hand, the pullback of a Morse function on $\mathbb{CP}^{d-1}$ by the complex Hopf fibration yields a function $p_1$ on $\Sph^{2d-1}$ such that $\Xray p_1$ admits $2d-2$ nondegenerate directions. Thus in any dimension $d\geq2$ there are always examples of $p_1$ satisfying the Morse--Bott hypothesis of Theorem \ref{maintheorem}.

The two-term Weyl asymptotic  \eqref{eq:twotermweyl} in Theorem \ref{maintheorem} should be viewed as a refinement of the asymptotic formula
\begin{equation} \label{eq:HRasymptotic}
N(\lambda) = (2\pi)^{-d}\lambda^d \int_{\{p_2 \leq 1\}} dxd\eta - (2\pi)^{-d}\lambda^{d-1/2} \int_{\{p_2 = 1\}} p_1 \frac{dS}{|\nabla p_2|} + O(\lambda^{d-1})
\end{equation} 
established earlier by Helffer--Robert \cite{HeRo81}. Indeed, \eqref{eq:HRasymptotic} is recovered from the leading order term in \eqref{eq:twotermweyl} by writing the volume of $\{p_2 + p_1 \leq \lambda\}$ as $\lambda^d$ times the volume of  $\{p_2 + \lambda^{-1/2} p_1 \leq 1\}$ and expanding the latter volume in powers of $\lambda^{-1/2}$.

The necessity of a nondegeneracy hypothesis on
$p_1$ in Theorem~\ref{maintheorem} is apparent already from the unperturbed harmonic oscillator $H_0$. Its eigenfunctions are
given by products of Hermite functions, defined for a multiindex
$\alpha=(\alpha_1,\dots,\alpha_d)$ by
$$
\psi_{\alpha}(x) = \pi^{-d/4} (2^{\smallabs{\alpha}} \alpha!)^{-1/2} H_\alpha(x)e^{-\smallabs{x}^2/2},
$$
with $H_j$ the $j$'th Hermite polynomial and $H_\alpha=\prod_{j=1}^d H_{\alpha_j}(x_j)$; the corresponding
eigenvalues are
$$
\smallabs{\alpha}+\frac d2.
$$
Thus the eigenvalues are $\lambda=j+d/2$ for $j \in \NN,$ arising with
multiplicity
$$
p(\lambda-d/2,d),
$$
where $p(j,d)$ denotes the the number of ways of writing $j$ as a sum of $d$
nonnegative integers.  Since in fact
$$
p(j,d) =
\begin{pmatrix}
  d+j-1\\j
\end{pmatrix},
$$
and this quantity is bounded below for $j \in \NN$ by a multiple of $j^{d-1},$ the
remainder term in the Weyl law for $H_0$ certainly cannot be
$o(\lambda^{d-1}).$ 

The improvement in the Weyl law is not directly related to the
propagation of singularities: If $u \in \mathcal{S}'$, we show that
\begin{align*}
    \WF(U(2\pi k)u) = \{ (x + k\pa_\xi (\Xray p_1)(0,\xi), \xi): (x,\xi) \in \WF(u)\}.
\end{align*}
If we consider the operator $H = H_0 + \sqrt{H_0},$ for which the
symbol of the perturbation is $p_1(x,\xi) = \sqrt{p_2(x,\xi)}$, we see that singularities at time $t=2\pi k$ are shifted by $2\pi k\pa_\xi |\xi|$.
On the other hand there is no improvement in the Weyl law, because the
eigenvalues of $H$ are $j + d/2 + \sqrt{j + d/2}$
and the multiplicity remains $p(j,d)$.

\subsection{Strategy of proof} \label{section:strategy}
As in \S\ref{section:mainresults}, denote the free Hamiltonian  (namely the exact harmonic oscillator) by $H_0 = \Op_W(p_2)$ and the perturbed one by $H = \Op_W(p)$.
Further, let 
\[
U(t) = e^{-itH}, \quad U_0(t) = e^{-itH_0}, \quad \rd(t) = U_0(-t) U(t)
\] 
be the perturbed, free, and ``reduced''
propagator, respectively. Then, $F(t)$ satisfies the evolution equation
 \begin{equation} \label{Fprime}
\left\{\begin{aligned}
(i\pa_t - P(t) ) \rd(t) &= 0\\
\rd(0) &= \id,
\end{aligned}\right.
\end{equation}
where $P(t) = U_0(-t) (H-H_0) U_0(t)$. The main strategy is
to show, following the methods of Helffer--Robert in \cite{HeRo81}, that $\rd(t)$ has an
oscillatory integral parametrix with an explicit phase function. It is then possible to construct a parametrix for $U(t)$ by composing the
parametrix for $\rd(t)$ with the free propagator $U_0(t)$, whose Schwartz kernel is given explicitly by Mehler's formula. Finally, via another more delicate stationary phase
computation, we arrive at estimates on the singularities of $\Tr
U(t).$  The results on spectral asymptotics then follow via a
known Tauberian theorem.

\subsection{Prior results}
It has been known since the work of Zelditch \cite{Zelditch83} (see
also \cite{Weinstein85}) that
singularities of the propagator for perturbations of the harmonic oscillator by a
symbolic potential $V(x) \in S^0(\RR^d)$ reconstruct at times $t \in  \pi
\ZZ.$ Moreover, if the potential is merely bounded with all its derivatives, Zelditch showed that $\singsupp \Tr U(t) \subset 2 \pi \ZZ$. It was later shown by Kapitanski--Rodnianski--Yajima \cite{KapRodYaj97} that the singular support of $\Tr U(t)$ is contained in $2 \pi \ZZ$ supposing only that the perturbation is
subquadratic.  

More general propagation of singularities for geometric generalizations of the
harmonic oscillator to manifolds with large conic ends (``scattering
manifolds'') was also studied by the third author in \cite{Wunsch99}
and refined by Mao--Nakamura \cite{MaoNak09}, which allows for perturbations in the symbol class
$S^{1-\ep}(\RR^d)$ for any $\ep>0.$  

That something dramatic happens for
potential perturbations in $S^1(\RR^d),$ by contrast, is clear from the results of Doi
\cite{Doi03}, where the author shows that the location in space of the
singularities of the Schr\"odinger propagator at times $t \in \pi \ZZ$
is indeed subject to an interesting geometric shift from this type of
perturbation.

Helffer--Robert \cite{HeRo81} studied the singularity at $t=0$ of the
Schr\"odinger trace (and, consequently, the Weyl law) for the class of perturbations under consideration
here, viz., those that are isotropic operators of order $1.$  While this
class does not include potential perturbations of order $1,$ hence is
perhaps less natural on physical grounds, it is more natural from
the point of view of symplectic geometry.  The analysis in
\cite{HeRo81} was limited to the study of the main singularity at $t=0$, hence did not include the considerations of the global
flow studied here.  The parametrix construction of \cite{HeRo81} is
essential in our work, however, as we extend (a version of) it to long times via
composition with the free propagator.

The novelty of our result lies in the delicate perturbation resulting
from a one-symbol.  This is unlike the case famously considered by Duistermaat--Guillemin in 
\cite{DuGu75} under which a genericity hypothesis on the geodesic flow
yields an improvement to the Weyl law remainder for the Laplacian on a
compact manifold. Here, the most naive version of propagation of
singularities, as described by isotropic wavefront set, is unaffected
by the perturbation.  The perturbative effect can be seen
heuristically as a higher-order
correction to the motion of Lagrangian subspaces of $T^*\RR^n:$ at
times $t\in 2 \pi \ZZ,$ the Lagrangian $N^*\{0\},$ for instance, has evolved
under the bicharacteristic flow to another Lagrangian that is
asymptotic to $N^*\{0\}$ as $\smallabs{\xi}\to \infty,$ but it is the
next-order term in the asymptotics of this Lagrangian that governs the
contribution to the Schr\"odinger trace, and hence to the Weyl law
remainder term.

\section{The isotropic calculus}\label{section:calculus}

We now discuss the calculus of \emph{isotropic} pseudodifferential
operators on $\RR^d,$ sometimes referred to as the \emph{Shubin
calculus} and employed by many authors \cites{Helffer84,Shubin78,Hormander91}. The notation used here follows that of \cite{Shubin78}*{Chapter IV}.
Throughout, $\mathcal{S}=\mathcal{S}(\RR^d)$ will denote the space of
Schwartz functions on Euclidean space.

Define the isotropic symbol class of order $k$ by
\[
    \symb^k = \big\{a(x,\xi)\in \CI(\RR^{d}\times \RR^d)\colon \smallabs{\pa_x^\alpha
    \pa_\xi^\beta a} \leq C_{\alpha\beta}
    \ang{(x,\xi)}^{k-(\smallabs{\alpha}+\smallabs{\beta})}\big\}.
\]
The best constants in each of these bounds define a family of seminorms for which $\symb^k$ is a Frech\'et space. Within this class are distinguished the \emph{classical symbols}
$\symb^k_{\cl}$, namely those enjoying asymptotic expansions
\[
    a \sim a_k+a_{k-1}+\dots,
\]
where $a_{j}$ homogeneous in $(x,\xi)$ of degree $j.$ This of course differs from the usual Kohn--Nirenberg symbol class \begin{equation} \label{eq:KN}
S^k(\RR^m;\RR^n) = \big\{a(y,\eta) \in \CI(\RR^{m} \times \RR^n)\colon \smallabs{\pa_y^\alpha
	\pa_\eta^\beta a} \leq C_{\alpha\beta}
\ang{\eta}^{k-\smallabs{\beta}}\big\},
\end{equation}
which will make a brief appearance in \S\ref{sec:sing-zero}.

The class of isotropic pseudodifferential operators of order $k$ will be denoted
\[
    \calc^k=\{\Op_\bullet (a)\colon a \in \symb^k\},
\]
and $\calccl^k$ the quantizations of $\symbcl^k.$ Here, $\bullet = L,R$ or $W$
are the left, right, or Weyl quantization. The Frech\'et topology on $\calc^k$ is inherited from that of $\symb^k$. Finally, set $\calc = \bigcup_k \calc^k$ and $\calccl = \bigcup_k \calccl^k$, as well as $\calc^{-\infty} = \bigcap_k \calc^k$.

The calculus enjoys the following properties, for the proofs of which the reader is referred to \cite{Helffer84}.  
The main novel feature to bear in mind here is that error terms
in the symbol calculus are consistently \emph{two} orders lower (see
in particular property (\ref{commutatorproperty}) below),
reflecting the improvement in decay of symbols under differentiation
in both space and momentum variables.\footnote{An alternative
notational convention would indeed to take the order of an operator
  to be \emph{half} the order used here, so that, e.g., the harmonic
  oscillator would have order $1$; this would fit better with
  the spectral asymptotics results, but with a cost in confusion about
  orders of growth of symbols.}

\begin{enumerate}[(I)] \itemsep4pt
\item
    $\calc$ is a filtered $*$-algebra, with $\calccl$ as a sub-algebra.
\item
    Differential operators of the form 
    \[
        \sum_{\smallabs{\alpha}+\smallabs{\beta}\leq k}
        a_{\alpha,\beta}x^\alpha D^\beta
    \]
    lie in $\calc^k.$
\item
    There is a principal symbol map\footnote{We could also define the principal symbol as an element in $\symb^k/\symb^{k-2}$, but this has the downside that it is no longer homogeneous for classical symbols.}
    \[
    \sigma_k: \calc^k \to \symb^k/\symb^{k-1}
    \]
    such that the following principal symbol sequence is exact:
    \begin{equation*}
        0 \to \calc^{k-1} \to \calc^k \overset{\sigma_k}{\to} \symb^k/\symb^{k-1} \to 0.
    \end{equation*}
    \item The left, right, and Weyl quantization maps $\Op_\bullet$ all
    map $\symb^m \to \calc^m$, and each satisfies 
    \[
    \sigma_m\circ \Op_\bullet: a \to [a] \in \symb^m/\symb^{m-1}. \footnote{This map is independent of the specific quantization.}
    \]
    The Weyl quantization map further enjoys the exact Egorov
    property\footnote{For more general results on metaplectic
    invariance, cf.\ \cite{Hormander3}*{Theorem 18.5.9}.} for the harmonic oscillator propagator:
    \[
        U_0(-t)\Op_W(a)U_0(t) =\Op_W(a\circ \exp(t\hamvf_0)).
    \]
\item\label{commutatorproperty}
    If $A\in\calc^m,$ $B\in\calc^{m'}$, then $$[A,B]\in\calc^{m+m'-2}$$
    and satisfies
    \begin{equation*}
        \sigma_{m+m'-2}([A,B])
        =\frac{1}{i}\{\sigma_m(A), \sigma_{m'}(B)\},
    \end{equation*}
    with the Poisson bracket indicating the (well-defined) equivalence
    class of the Poisson bracket of representatives of the equivalence
    classes of each of the principal symbols.
\item
    Every $A\in\calc^0$ defines a continuous linear map on $L^2(\RR^d).$
\item
    The isotropic Sobolev spaces, $\sob^s$
    are defined for $s\geq 0$ by
    \[
        f \in \sob^s \Longleftrightarrow Af \in L^2(\RR^d)\ \text{for
        all}\ A \in\calc^s
    \]
    Equivalently, it is enough to require that $Af \in L^2$ for
    a single {\em elliptic} $A\in\calc^s$.
    As usual, ellipticity means that the principal symbol of $A$ in $\symb^s/\symb^{s-1}$ has an
    inverse in $\symb^{-s}/\symb^{-s-1}$.
    This then fixes
    the norm (up to equivalence), 
    \[
    \| u \|_{\sob^s} = \|u\|_{L^2}+\|Au\|_{L^2},
   \]
    on $\sob^s.$
    For $s<0$ the spaces are defined by duality.  For all $m,s\in \RR$ and all
    $A\in\calc^m$
    \[
        A: \sob^s \to\sob^{s-m}
    \]
    is continuous, and moreover the operator norm of $A$ is bounded by seminorm of its total (left, right, or Weyl) symbol. 
\item\label{limitsob}
    The scale of isotropic Sobolev spaces satisfies \[\displaystyle\bigcap_m \sob^m=\mathcal{S}(\RR^d),\quad \bigcup_m
    \sob^m=\mathcal{S'}(\RR^d).\]
    
\item
    An operator in $A \in \calc^m$ is said to be \emph{elliptic} at
    $q\in \Sph^{2d-1}$ if there is an open cone $U$ containing $q$ on which $|(x,\xi)|^{-m}|\sigma_m(A)| \geq c >0$ for $|(x,\xi)|$ sufficiently large; otherwise, it is said to be \emph{characteristic} at $q.$ Let
    $\Sigma_m(A)$ denote the set of characteristic points.
    
\item
    There is an operator wave front set 
    $\WF'$ such that for $A\in\calc$,
    $\WF'(A)$ is a closed conic (in \emph{all} variables) subset of
    $\RR^{2d}$, or equivalently a closed subset of $\Sph^{2d-1};$ it can
    be defined as the essential support of the total symbol, and satisfies:
    \begin{enumerate}[(A)] \itemsep4pt
    \item
        $\WF' A^*=\WF' A,$
    \item
        $\WF'(AB)\subset\WF'(A)\cap\WF'(B),$ 
    \item
        $\WF'(A+B)\subset\WF'(A)\cup\WF'(B),$ 
    \item\label{operatorexists}
        for any $K\subset \Sph^{2d-1}$ closed and $U\subset \Sph^{2d-1}$
        open with
        $K\subset U$, there exists $A\in\calc^0$ such that 
        $\WF'(A)\subset U$ and $\sigma_0(A)=1$ on $K.$
    \item
        if $A\in\calc^m$ is elliptic at $q\in \Sph^{2d-1}$, then
        there exists a microlocal parametrix $G\in\calc^{-m}$ such that
        \begin{equation*}
        q\notin\WF'(GA-\id)\cup \WF'(AG-\id).
        \end{equation*}
    \item
        For each $A\in\calc^k$ the following are equivalent:
        \begin{itemize}
            \item $\WF'(A)=\emptyset,$
            \item $A\in\calc^{-\infty}$,
            \item $A: \schwartz'\to \schwartz.$
        \end{itemize} 
    \end{enumerate}
\item
    The isotropic wavefront set of $u \in \schwartz'$ is defined by
    \[
        \WFiso u = \bigcap_{\substack{A \in \calc^0\\Au \in \schwartz}} \Sigma_0(A),
    \]
    and there is also a scale of wavefront sets relative to the isotropic
    Sobolev spaces defined by
    \[
        \WFiso u = \bigcap_{\substack{A \in \calc^m\\Au \in L^2}} \Sigma_m(A).
    \]
    These sets satisfy the following:
    \begin{enumerate}[(A)] \itemsep4pt 
        \item $\displaystyle \WFiso u =\emptyset$ if and only if $u \in \schwartz.$
        \item $\displaystyle \WFiso^m u =\emptyset$ if and only if $u \in \sob^m.$
        \item $\displaystyle \WFiso Au \subset \WF'A \cap \WFiso u.$
        \item  $\displaystyle \WFiso u =\overline{\bigcup_{m \in \RR} \WFiso^m u}.$
    \end{enumerate}
\end{enumerate}

Since it is somewhat external to the basic features of the calculus,
we also record separately the following result on the relationship of
isotropic and ordinary wavefront set (cf.\ Proposition 2.7 of
\cite{Hormander91}):
\begin{proposition}\label{prop:isoWF}
Let $u \in \schwartz'.$  If $\{(0,\xi)\colon \xi \in \RR^d\} \cap
\WFiso u=\emptyset$ then $u \in \CI.$
\end{proposition}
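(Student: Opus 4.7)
The plan is to establish the contrapositively equivalent statement $\WF u = \emptyset$, where $\WF$ denotes the standard $C^\infty$ wavefront set; since $\WF u = \emptyset$ is equivalent to $u \in \CI$ for $u \in \schwartz'$, this suffices. Concretely, I will show that for every $(x_0, \xi_0) \in \RR^d \times (\RR^d \setminus 0)$ one has $(x_0, \xi_0) \notin \WF u$ by producing an operator $B \in \calccl^0$ whose symbol is at once an isotropic symbol concentrated near the ray $(0,\xi_0)$ and a Kohn--Nirenberg symbol classically elliptic at $(x_0, \xi_0)$, and which regularizes $u$ in the sense that $Bu \in \schwartz$.

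By hypothesis the point $(0, \xi_0/\smallabs{\xi_0}) \in \Sph^{2d-1}$ lies in the open complement of the closed set $\WFiso u$. Pick a small open neighborhood $V \subset \Sph^{2d-1}$ of $(0, \xi_0/\smallabs{\xi_0})$ with $\overline{V} \cap \WFiso u = \emptyset$, and let $\psi \in \CI(\Sph^{2d-1})$ be supported in $V$ and identically one on an open subneighborhood $V'$ of $(0, \xi_0/\smallabs{\xi_0})$. Extend $\psi$ to a $0$-homogeneous smooth function on $\RR^{2d} \setminus 0$ and smoothly cut off near the origin to obtain $b \in \symbcl^0$. The operator $B = \Op(b) \in \calccl^0$ has $\WF'(B) \subset V$, so the microlocal inclusion
\[
\WFiso(Bu) \subset \WF'(B) \cap \WFiso u = \emptyset
\]
from the properties of the isotropic calculus forces $Bu \in \schwartz$.

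Next, the isotropic estimates
\[
\smallabs{\pa_x^\alpha \pa_\xi^\beta b(x,\xi)} \leq C_{\alpha\beta} \ang{(x,\xi)}^{-\smallabs{\alpha}-\smallabs{\beta}} \leq C_{\alpha\beta} \ang{\xi}^{-\smallabs{\beta}}
\]
also place $b$ in the Kohn--Nirenberg class $S^0(\RR^d;\RR^d)$ defined in \eqref{eq:KN}. The crucial geometric observation is that $b$ is \emph{classically} elliptic at $(x_0, \xi_0)$: for $x$ in a bounded neighborhood of $x_0$ and $\xi$ in a sufficiently narrow conic neighborhood of $\xi_0$ with $\smallabs{\xi}$ large, the projection $(x,\xi)/\smallabs{(x,\xi)}$ lies in $V'$, so $b(x, \xi) = 1$ throughout that region. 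The standard elliptic microlocal regularity theorem for Kohn--Nirenberg operators then combines $Bu \in \schwartz \subset \CI$ with classical ellipticity of $b$ at $(x_0, \xi_0)$ to yield $(x_0, \xi_0) \notin \WF u$; since $(x_0, \xi_0)$ was arbitrary, $u \in \CI$.

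The main delicacy is the dual role played by $b$: it is engineered from data on $\Sph^{2d-1}$ to exhibit isotropic ellipticity at $(0, \xi_0/\smallabs{\xi_0})$, yet must simultaneously produce Kohn--Nirenberg ellipticity at the fixed classical phase space point $(x_0, \xi_0)$. This bridge is supplied by the comparison $\ang{(x,\xi)} \sim \ang{\xi}$ for $x$ bounded and $\smallabs{\xi}$ large, which guarantees that any ray $(x, t\xi_0)$ with $x$ bounded projects on $\Sph^{2d-1}$, as $t \to \infty$, to the single isotropic direction $(0, \xi_0/\smallabs{\xi_0})$ used to define $b$.
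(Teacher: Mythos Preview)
Your argument is correct, but it follows a different line from the paper's own proof. The paper proceeds by a single global decomposition: it chooses $a_0\in\symb^0$ equal to $1$ on a conic neighborhood of the whole fiber $\{(0,\xi)\}$ and supported away from $\WFiso u$, writes $u=A_0u+(1-A_0)u$ with $A_0=\Op_L(a_0)$, observes $A_0u\in\schwartz$ by the isotropic calculus, and then notes that $1-a_0$ is supported where $|\xi|\leq C(1+|x|)$, so that for any $\chi\in\CcI(\RR^d)$ the symbol $\chi(x)(1-a_0(x,\xi))$ is compactly supported and hence $\chi(x)(1-A_0)u$ is smooth. By contrast you work pointwise in the classical phase space, building for each $(x_0,\xi_0)$ a single symbol $b$ that is simultaneously an isotropic cutoff near $(0,\xi_0/|\xi_0|)$ (forcing $Bu\in\schwartz$) and a Kohn--Nirenberg symbol elliptic at $(x_0,\xi_0)$ (allowing you to invoke classical microlocal elliptic regularity). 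The paper's route is more elementary in that it never appeals to the Kohn--Nirenberg parametrix construction, only to the trivial fact that an operator with compactly supported amplitude is smoothing; your route, on the other hand, makes the mechanism linking the two wavefront sets more transparent, namely that any classical ray $\{(x_0,t\xi_0):t\to\infty\}$ limits on $\Sph^{2d-1}$ to the isotropic direction $(0,\xi_0/|\xi_0|)$.
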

\begin{proof}
Choose $a_0 \in \symb^0$ such that $a_0 = 1$ in a conic neighborhood of $\{(0,\xi)\colon \xi \in \RR^d\}$
and $\supp a_0 \cap \WFiso(u) = \emptyset$ and set $A_0 = \Op_L(a_0)$.
By the properties of the calculus $A_0 u \in \schwartz$.
and the operator $A_1 = (1 - A_0)$ is given by
\begin{equation*}
    (A_1u)(x) = (2\pi)^{-d} \int e^{i\ang{x,\xi}} (1 - a_0(x,\xi)) \hat{u}(\xi) d\xi.
\end{equation*}
Because $1- a_0$ is supported away from the vertical space $\{(0,\xi) \colon \xi \in \RR^d\}$, we obtain
$|\xi| \leq C (1 + |x|)$ there.
This implies that $\chi(x)A_1u(x)$ is smooth for any cut-off function $\chi \in \CcI$, because $\chi(x)(1-a_0(x,\xi)) \in \CcI$.
\end{proof}
\section{Singularities of the trace}

\subsection{Propagation of isotropic wavefront set}
Since $P(t) = U_0(-t)(H-H_0)U_0(t)$ and $H-H_0 \in \calc^1,$ it follows from the exact Egorov theorem that
\begin{equation}\label{P}
    P(t) \in \calc^1,\quad P(t)^*=P(t),
\end{equation}
and $P(t)$ is in fact a smooth family of such operators. Somewhat surprisingly, the evolution generated
by $P(t)$ does not move around isotropic wavefront set; this uses
essentially the property of the isotropic calculus that errors are two
orders lower.  The analogous result of course fails for usual wavefront set if $P(t)$ is replaced with an ordinary first order, self-adjoint pseudodifferential operator such as $\sqrt{\Lap}.$ 

\begin{lemma}\label{lemma:nonEgorov}
    Let $P(t) \in \calc^1$ be a smooth family of self-adjoint operators, and assume there is a solution $F(t)$ of the equation
    \begin{equation*}
        \left\{\begin{aligned}
            (i\pa_t - P(t) ) \rd(t) &= 0\\
            \rd(0) &= \id
        \end{aligned}\right.
    \end{equation*}
    such that $F \in \mathcal C^0(\RR_t; \mathcal{L}(\sob^s, \sob^s)) \cap \mathcal C^1(\RR_t; \mathcal{L}(\sob^s,\sob^{s-1}))$ for each $s \in \RR$. Then, $\WFiso \rd(t) u=\WFiso u$ for each $u \in \schwartz'$ and $t \in \RR$.
\end{lemma}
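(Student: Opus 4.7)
The key is property~(\ref{commutatorproperty}): commutators in the isotropic calculus drop the order by two. Consequently the Heisenberg-type conjugation by $F(t)$ cannot alter principal symbols at leading order, and so cannot move isotropic wavefront set.

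First, from $P(t)^* = P(t)$ and $F(0) = \id$ one computes $\partial_t(F(t)^* F(t)) = 0$, hence $F(t)$ is unitary on $L^2$, and $F(t)^{-1} = F(t)^*$ is bounded on every $\sob^s$ by duality. It then suffices to prove the inclusion $\WFiso F(t)u \subset \WFiso u$: the reverse inclusion follows by applying the same argument to the backward propagator $V(s) = F(t-s)F(t)^{-1}$, which solves a Schrödinger-type equation of the same form (driven by $-P(t-s) \in \calc^1$) and satisfies the corresponding regularity hypothesis.

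Fix $q \in \Sph^{2d-1}\setminus\WFiso u$. Using property~(\ref{operatorexists}), choose $A_0 \in \calc^0$ with $\sigma_0(A_0) \equiv 1$ near $q$ and $\WF'(A_0) \cap \WFiso u = \emptyset$, so that $A_0 u \in \schwartz$. The technical heart of the argument is to construct a smooth family $A(t) \in \calc^0$ with $A(0) = A_0$, with $\sigma_0(A(t))$ independent of $t$ (so $A(t)$ remains elliptic at $q$), such that
\[
E(t) := i\dot A(t) - [P(t), A(t)] \in \calc^{-\infty}
\]
smoothly in $t$. Writing $A(t) \sim \sum_{k \geq 0} A_{-k}(t)$ with $A_{-k}(t) \in \calc^{-k}$, the order-$0$ equation is solved trivially by $A_0(t) = A_0$ since $[P(t), A_0] \in \calc^{-1}$ by the gain-two property; at each subsequent order one solves an ODE in $t$ for the leading symbol, with source determined by the previously constructed terms via the full Moyal expansion of the composition. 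Borel summation within the isotropic class then produces the desired $A(t)$, with the residual absorbed into $E(t)$.

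With $A(t)$ in hand, set $w(t) = A(t)F(t)u$. A direct calculation gives
\[
(i\partial_t - P(t))w(t) = E(t) F(t) u \in \CI(\RR_t; \schwartz),
\]
since $E(t) \in \calc^{-\infty}$ maps $\schwartz'$ to $\schwartz$. Combined with $w(0) = A_0 u \in \schwartz$, Duhamel's formula yields
\[
w(t) = F(t) A_0 u - i \int_0^t F(t) F(s)^{-1} E(s) F(s) u \, ds;
\]
the $\sob^M$-boundedness of $F(t)$ and $F(s)^{-1}$ for every $M$, together with property~(\ref{limitsob}), gives $w(t) \in \schwartz$. Microlocal ellipticity of $A(t)$ at $q$ then forces $q \notin \WFiso F(t)u$. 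The main technical obstacle is the symbolic construction of $A(t)$ — setting up the hierarchy of transport-type ODEs at each symbolic order, verifying solvability within the isotropic class with control of seminorms, and Borel-summing. Everything else is a standard microlocal energy estimate; the essential simplification over the non-isotropic setting is that the gain of two orders makes the leading-order transport equation trivial, which is precisely why isotropic wavefront set is unmoved by the evolution.
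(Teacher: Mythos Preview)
Your argument is correct and takes a genuinely different route from the paper. The paper does \emph{not} build a time-dependent intertwiner; instead it fixes a time-independent $A\in\calc^k$ with $\WF'(A)$ in the complement of $\WFiso u$, regularizes to $A_\varepsilon\in\calc^{k-1}$, and runs an energy estimate
\[
\frac{d}{dt}\|A_\varepsilon F(t)u\|^2 \le 2\|A_\varepsilon F(t)u\|\,\|[A_\varepsilon,P(t)]F(t)u\|.
\]
The gain-two property places $[A_\varepsilon,P(t)]$ in $\calc^{k-1}$, so the right-hand side is controlled by an inductive hypothesis at order $k-1$; Gr\"onwall and weak compactness then yield $AF(t)u\in L^2$. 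The induction bootstraps through the Sobolev scale.

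Your Egorov-type construction exploits the same gain-two phenomenon differently: because $[P(t),A_0]\in\calc^{-1}$, the principal transport equation is $\partial_t\sigma_0(A(t))=0$, and each lower-order correction satisfies a pure ODE in $t$ with no spatial characteristics. This makes the ``trivial flow'' interpretation very transparent. The trade-off is that you pay for a Borel-summation step (with uniform-in-$t$ seminorm control) and a Duhamel argument, whereas the paper's proof is somewhat more elementary---no asymptotic summation, just a regularized commutator estimate and induction. Both are standard microlocal strategies; the paper's is closer to a positive-commutator proof, yours to a classical Egorov argument in which the Hamilton flow happens to be the identity.
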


\begin{proof}
Suppose that $u \in \schwartz'$, hence there exists $s_0$ such that $u \in \sob^{s_0}$, and by hypothesis $\rd(t)u \in \sob^{s_0}$ for all $t \in \RR$. The goal is to show by induction that for every $k,$ the set $\WFiso^k \rd(t) u$ is invariant; this is trivially true for
$k=s_0$, as the wavefront set remains empty.

Suppose that $U\subset \Sph^{2d-1}$
is open, and $\WFiso^{k} u \cap U=\emptyset.$ The inductive step is completed by showing that
\[
\WFiso^{k-1} \rd(t) u \cap U = \emptyset \Longrightarrow \WFiso^k \rd(t) u \cap U = \emptyset. 
\] 
Let $A \in \calc^k$ be fixed independently of $t$ such that $\WF' A \subset U$. Choose a bounded family 
\[
\{ A_\varepsilon\colon \varepsilon \in [0,1) \} \subset \calc^k
\]
such that $A_0 = A$ and $A_\varepsilon \in \calc^{k-1}$  for each $\varepsilon \in (0,1)$. Furthermore, assume that $\WF' A_\varepsilon \subset U$ for $\varepsilon \in [0,1)$. For instance, let $A_\varepsilon = S_\varepsilon A$, where 
\[
S_\varepsilon = \Op_W ((1+\varepsilon(|x|^2+|\xi|^2))^{-1/2}).
\]
Observe in this case that $A_\varepsilon \rightarrow A$ in the topology of $\calc^{k+1}$. Using \eqref{Fprime}, compute
\begin{equation*}
    \frac{d}{dt} A_\varepsilon\rd(t) =- i P(t) (A_\varepsilon\rd(t))-i [A_\varepsilon,P(t)]\rd(t).
\end{equation*}
Since  $P(t)$ is self-adjoint and $A_\varepsilon \rd(t) u \in L^2(\RR^d)$ by the inductive hypothesis, \begin{equation}\label{notEgorov}
    \begin{aligned}
    \frac{d}{dt}  \smallnorm{A_\varepsilon\rd(t) u}^2
        &= 2\Re\smallang{(d/dt)A_\varepsilon \rd(t)u, A_\varepsilon\rd(t) u}\\
        &=- 2\Re \smallang{i[A_\varepsilon,P(t)] \rd(t) u, A_\varepsilon \rd(t) u}\\
        &\leq 2\smallnorm{A_\varepsilon\rd(t) u}\,  \smallnorm{[A_\varepsilon,P(t)] \rd(t) u}.
    \end{aligned}
\end{equation}
On the other hand, since $A_\varepsilon$ is bounded in $\calc^k$, it
(crucially) follows that $[A_\varepsilon, P(t)]$ is bounded in
$\calc^{k-1}$ for
$\varepsilon \in [0,1)$. Furthermore, the operator wavefront set of
$[A_\varepsilon,P(t)]$ is contained in $U$. Now integrate to find that
\[
 \| A_\varepsilon \rd(t) u \|^2 \leq e^t\|A_\varepsilon u \|^2 + e^t \int_0^t e^{-s} \| [A_\varepsilon,P(s)]F(s)u \|^2 \, ds
\]
for each fixed $t$, where the right hand side is uniformly bounded as
$\varepsilon \rightarrow 0$. From the weak compactness of the unit
ball in $L^2(\RR^d)$, conclude that $A_{\varepsilon_k} \rd(t) u$ has a
weak limit in $L^2(\RR^d)$ along a sequence of $\varepsilon_k
\rightarrow 0$, hence in $\schwartz'(\RR^d)$ as well. On the other
hand,  $A_\varepsilon \rd(t) u \rightarrow A \rd(t) u$ in
$\schwartz'(\RR^d)$, since $A_\varepsilon \rightarrow A$ in
$\calc^{k+1}$. It follows that $A\rd(t)u \in L^2(\RR^d)$, and we have shown
that for $t>0,$
$$
\WFiso F(t) u \subset \WFiso u.
$$
To obtain the reverse inclusion, we repeat the argument above,
integrating a time-reversed version of \eqref{notEgorov} from $t$ to $0$ instead of $0$ to $t.$
\end{proof}

Lemma \ref{lemma:nonEgorov} can be applied directly to the evolution equation \eqref{Fprime}: in that case $F(t) = U_0(-t)U(t)$ and both operators in this composition preserve $\sob^{s}$ for each $s$; thus $F(t)$ has the requisite mapping properties. The invariance of isotropic wavefront set under $U(t)$ follows directly from this lemma:

\begin{proposition}\label{prop:prop}
    For all $u \in \schwartz'$ and $t \in \RR$,
    \[
        \WFiso U(t) u=\WFiso U_0(t) u=\exp(t\hamvf_0) \WFiso u.
    \]
\end{proposition}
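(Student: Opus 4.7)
The plan is to combine two ingredients: the exact Egorov property of $U_0(t)$ (item IV in the calculus list) yields the propagation of isotropic wavefront set under the free evolution, while Lemma \ref{lemma:nonEgorov}, applied to the reduced propagator $F(t) = U_0(-t) U(t)$, shows that $F(t)$ does not move isotropic wavefront set at all. Since $U(t) = U_0(t) F(t)$, these two facts will combine to give both equalities in the proposition.

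For the first step, I would show that for every $v \in \schwartz'$,
\[
\WFiso U_0(t) v = \exp(t\hamvf_0) \WFiso v.
\]
Suppose $q \notin \exp(t\hamvf_0) \WFiso v$, i.e.\ $q' := \exp(-t\hamvf_0)(q) \notin \WFiso v$. By the definition of $\WFiso$ there exists $A = \Op_W(a) \in \calc^0$ elliptic at $q'$ with $Av \in \schwartz$. The exact Egorov identity gives
\[
U_0(t) A U_0(-t) = \Op_W(a \circ \exp(-t\hamvf_0)),
\]
and since $\exp(-t\hamvf_0)$ is a linear symplectic rotation, the symbol $a \circ \exp(-t\hamvf_0)$ is elliptic at $q$. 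Applying this conjugate to $U_0(t) v$ yields $U_0(t) A v$, which is Schwartz because $U_0(t)$ preserves $\schwartz$. Hence $q \notin \WFiso U_0(t) v$. The reverse inclusion follows by the same argument with $t$ replaced by $-t$.

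For the second step, the reduced propagator $F(t)$ satisfies the evolution equation \eqref{Fprime} with $P(t) \in \calc^1$ self-adjoint and smoothly dependent on $t$ by \eqref{P}, and since both $U_0(t)$ and $U(t)$ preserve every isotropic Sobolev space $\sob^s$, the family $F(t)$ meets the regularity hypotheses of Lemma \ref{lemma:nonEgorov}. That lemma then gives $\WFiso F(t) u = \WFiso u$ for all $u \in \schwartz'$ and $t \in \RR$. Combining both steps,
\[
\WFiso U(t) u = \WFiso U_0(t)(F(t) u) = \exp(t\hamvf_0) \WFiso F(t) u = \exp(t\hamvf_0) \WFiso u = \WFiso U_0(t) u.
\]
The main subtlety is the Egorov-type step: one needs the special metaplectic invariance of the Weyl calculus under $U_0(t)$, which here gives an exact rather than approximate formula, so that ellipticity and Schwartz regularity transform cleanly under conjugation.
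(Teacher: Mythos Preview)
Your proof is correct and follows essentially the same approach as the paper: both combine Lemma~\ref{lemma:nonEgorov} applied to $F(t)$ with the exact Egorov property of $U_0(t)$. The only cosmetic difference is that the paper writes $U_0(t)u = F(-t)U(t)u$ and invokes the lemma at time $-t$, whereas you write $U(t)u = U_0(t)F(t)u$; you also spell out the Egorov step for $\WFiso$ in more detail than the paper does.
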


\begin{proof}
Since $U_0(t) u=\rd(-t) U(t)u$, the first equality follows from
Lemma~\ref{lemma:nonEgorov}, while the second follows from the exact
Egorov theorem for $U_0(t).$
\end{proof}

Equipped with Proposition \ref{prop:prop}, there is a simple proof of Proposition~\ref{prop:wavetrace} following the strategy of \cite{Wunsch99}. 

\begin{proof}[Proof of Proposition \ref{prop:wavetrace}]
Pick any small interval
$I$ not containing a multiple of $2\pi.$ By compactness of the
sphere, there exists a partition
of unity $\{a_j^2\colon j\in J\}$ of $\Sph^{2d-1}$ such that $a_j \cdot (a_j\circ \exp(t\hamvf_0))=0$ for
all $j\in J$ and $t \in I.$  Using an iterative construction in the calculus, it is possible to find $A_j \in \calc^0$ satisfying $\sigma_0(A_j) = a_j$ and $\WF'(A) \subset \supp a_j$, such that
\[
    \sum A_j^2=\id+R,
\]
where $R \in \calc^{-\infty}$ (cf.\
\cite{Wunsch99}*{Corollary 4.7}). Then, computing in the sense of tempered distributions,
\begin{equation}\label{traceiscyclic}
    \begin{aligned}
        \Tr U(t) &=\Tr  \sum A_j^2 U(t) -R U(t)\\
        &= \Tr  \sum A_j U(t) A_j -R U(t).
    \end{aligned}
\end{equation}
The term $A_j U(t) A_j$ maps $\schwartz'\to \schwartz$ by propagation
of singularities (Proposition~\ref{prop:prop}), as do all its
derivatives, and $R U(t)$ also has this property.  Hence the right hand side of
\eqref{traceiscyclic} and
all its derivatives are bounded for $t \in I,$ so
$\Tr U(t) \in \CI(I).$  This completes the proof of
Proposition~\ref{prop:wavetrace}.
\end{proof} 

\section{Parametrix}

\subsection{Oscillatory integrals} \label{subsect:oscillatory} Throughout the rest of the paper it will be important to consider oscillatory integrals of the form
\begin{equation} \label{eq:oscillatory}
I(a,\psi)(z) = \int e^{i\psi(z,\eta)} a(z,\eta) \,d\eta, \quad (z,\eta) \in \RR^k \times \RR^m,
\end{equation}
where $\psi$ is a real-valued quadratic form in $(z,\eta)$. References for this material are \cite{Helffer84}*{Chapter III} and \cite{AsFu78}.  If $\psi$ satisfies the nondegeneracy hypothesis
\begin{equation} \label{eq:nondegeneratephase}
\mathrm{rank} \begin{pmatrix} \partial^2_{\eta z} \psi  & \partial_{\eta\eta}^2 \psi \end{pmatrix} = k+m,
\end{equation}
then \eqref{eq:oscillatory} defines a distribution $I(a,\psi) \in \schwartz'(\RR^k)$ provided the amplitude $a(z,\eta) \in \CI(\RR^{k+m})$ satisfies
\begin{equation} \label{eq:minamplitude}
|\partial^\alpha_{z,\eta} a(z,\eta)| \leq C_\alpha \left<z\right>^M\left<\eta\right>^M
\end{equation}
for some fixed $M \in \RR$ and every $\alpha$. This also means it is possible to consider phases of the form 
\[
\psi = \psi_2 + \psi_1,
\]
where $\psi_2$ is a quadratic form satisfying \eqref{eq:nondegeneratephase}, and $\psi_1$ is real-valued satisfying the bounds
\[
|\partial^\alpha_{z,\eta} \psi_1(z,\eta)| \leq C_{\alpha} 
\]
for each $|\alpha| \geq 1$. Indeed, for the purposes of regularization, it suffices to absorb $e^{i\psi_1}$ into the amplitude, since $e^{i\psi_1} a$ satisfies \eqref{eq:minamplitude}.

Now suppose that $\psi$ is a real-valued quadratic form in $(x,y,\eta) \in \RR^{d}\times \RR^d \times \RR^m$. If $\psi$ satisfies 
\begin{equation} \label{eq:operatorphase}
\det \begin{pmatrix}
\partial_{xy}^2 \psi & \partial_{x\eta}^2 \psi \\
\partial^2_{\eta y}\psi & \partial^2_{\eta\eta} \psi \end{pmatrix} \neq 0
\end{equation}
and $a(x,y,\eta)$ satisfies \eqref{eq:minamplitude} with $z=(x,y)$, then $I(a,\psi)(x,y)$ is the Schwartz kernel of an operator mapping $\schwartz(\RR^d) \rightarrow \schwartz(\RR^d)$ and $\schwartz'(\RR^d) \rightarrow \schwartz'(\RR^d)$. Furthermore, if $a(x,y,\eta) \in \schwartz(\RR^{2d}\times \RR^m)$, then the corresponding operator is residual, namely it maps $\schwartz'(\RR^d) \rightarrow \schwartz(\RR^d)$.

\subsection{Mehler's formula}
As discussed in \S  \ref{section:strategy}, the goal is to approximate $U(t)$ by first approximating $\rd(t)$ by an operator with oscillatory integral kernel of the form
\begin{align*}
\trd(t)(x,y) = \int e^{i \left< x-y,\eta\right> + i\phi_1(t,x,\eta)} a(t,x,\eta) \,d \eta,
\end{align*}
where $\trd(t) - \rd(t)$ is regularizing in suitable sense, and  $\phi_1$ is an explicit phase function which is homogeneous of degree $1$ in $(x,\eta)$. This is useful since $U(t) = U_0(t)F(t)$, and the Schwartz kernel of $U_0(t)$ is explicitly given by Mehler's formula, which is now recalled.

Begin by defining the phase function
\begin{equation} \label{eq:mehlerphase}
\phi_2(t,x,\eta) = \sec(t) (\left<x,\eta\right> - \sin(t)(|x|^2 + |\eta|^2)/2),
\end{equation}
where $(x,\eta) \in \RR^{2d}$. This is well defined for any $t \notin 2\pi \ZZ \pm \pi/2$, and for any such $t$, the quadratic form $\phi_2(t,x,\eta) - \left<y,\eta\right>$ satisfies \eqref{eq:operatorphase}. It is well known then that the Schwartz kernel of $U_0(t)$ satisfies
\begin{equation*} \label{eq:mehlerformula}
U_0(t)(x,y) = (2\pi)^{-d}\frac{(-1)^n}{\cos(t)^{1/2}} \int e^{i\phi_2(t,x,\eta) - i\left< y,\eta\right>} d\eta,
\end{equation*}
where $n$ is such that $t - 2\pi n \in (-\pi/2,\pi/2)$. Thus $U_0(t)(x,y)$ is of the form \eqref{eq:oscillatory}, where for each fixed $t$ the  amplitude is constant.

\subsection{Parametrix for the reduced propagator}
Recall that the reduced propagator $\rd(t)=U_0(-t)U(t)$ solves the evolution equation
\begin{equation}\label{doi-evolution}
    \left\{\begin{aligned}
        (i\pa_t - P(t) ) \rd(t) &= 0,\\
        \rd(0) &= \id.
    \end{aligned}\right.
\end{equation}
Here $P(t) \in \calccl^1$ is a smooth family of classical isotropic operators, and in the notation of \eqref{eq:pasymptotic} its total Weyl symbol $\pbis(t)$ satisfies
\[
\pbis(t) = (p-p_2) \circ \exp(t \hamvf_0)
\] 
by the exact Egorov theorem. In particular, its homogeneous of degree $1$ principal symbol $\pbis_1(t) = \sigma_1(\pbis(t))$ is simply $\pbis_1(t) = p_1 \circ \exp(t \hamvf_0)$. Define
\begin{equation} \label{eq:phi1}
\phi_1(t,x,\xi) = -\int_0^t p_1\circ \exp(s\hamvf_0)(x,\xi) \, ds,
\end{equation}
noting for future reference that $\phi_1(2\pi n, \bullet ) = -\Xray ^np_1 = -n \Xray p_1$ for each $n \in \ZZ$, where $\Xray p_1$ is given by \eqref{eq:Xray}. 

In the following lemma we construct an oscillatory integral parametrix for $F(t)$. 

\begin{lemma}\label{lem:parametrix-perturbed}
There exists $a \in\CI(\RR_t;  \symbcl^0)$ and an operator $\trd(t)$ with Schwartz kernel 
\begin{equation} \label{eq:approximaterd}
\trd(t)(x,y) = \int e^{i\left<x-y, \xi\right>+i\phi_1(t,x,\xi)}  a (t,x,\xi) \, d\xi
\end{equation}
approximately solving \eqref{doi-evolution} in the sense that
	\begin{equation*}\label{approx-doi-evolution}
	(i\pa_t - P(t) ) \trd(t)\in \CI(\RR_t; \mathcal{L}(\schwartz',\schwartz)), \quad \trd(0) = \id + K,
	\end{equation*}
where $K\colon \schwartz' \rightarrow \schwartz$. Here, the function $\phi_1$ is given by \eqref{eq:phi1}.
\end{lemma}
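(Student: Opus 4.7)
The plan is to construct $a$ as a classical symbol $a \sim a_0 + a_{-1} + \cdots$, with $a_j \in C^\infty(\RR_t; \symb^j)$ positively homogeneous of degree $j$ in $(x,\xi)$, and determined iteratively. Applying $(i\pa_t - P(t))$ to the ansatz \eqref{eq:approximaterd} produces another oscillatory integral with the same phase $\varphi(t,x,y,\xi) = \langle x - y, \xi\rangle + \phi_1(t,x,\xi)$ and amplitude
\begin{equation*}
  b(t,x,\xi) = i\pa_t a - (\pa_t \phi_1) a - \sum_{\alpha} \frac{1}{\alpha!} (\pa_\xi^\alpha \pbis)(t, x, \pa_x\varphi)\, D_z^\alpha\bigl[ e^{i\rho(x,z,\xi)} a(t,z,\xi)\bigr]_{z=x},
\end{equation*}
where $\rho(x,z,\xi) = \phi_1(t,z,\xi) - \phi_1(t,x,\xi) - (z-x) \pa_x \phi_1(t,x,\xi)$ is the quadratic remainder of $\varphi$ in $z$, and the expansion comes from the standard formula for a pseudodifferential operator acting on an oscillatory integral. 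Organizing $b$ by homogeneous degree using $\pbis(t) \sim \pbis_1(t) + \pbis_0(t) + \cdots$, the coefficient of degree $k$ (for each $k \leq 1$) will furnish an equation relating $a_k$, $\phi_1$, and the previously constructed $a_j$ with $j > k$.

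\textbf{Eikonal cancellation and transport.} The degree-$1$ equation reads $-(\pa_t \phi_1 + \pbis_1(t, x, \xi + \pa_x\phi_1)) a_0 = 0$. Since $\pa_x \phi_1$ and $\pa_\xi \pbis_1$ are both homogeneous of degree $0$, a first-order Taylor expansion in the last slot gives $\pbis_1(t,x,\xi + \pa_x \phi_1) = \pbis_1(t,x,\xi) + r$ with $r \in \symb^0$; by the choice \eqref{eq:phi1} of $\phi_1$ together with $\pbis_1(t,x,\xi) = p_1 \circ \exp(t\hamvf_0)$, the identity $\pa_t \phi_1 + \pbis_1(t,x,\xi) = 0$ then holds identically, so the degree-$1$ term vanishes regardless of $a_0$. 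Invoking the same cancellation at every lower degree, the degree-$k$ equation for $k \leq 0$ reduces to a first-order linear ODE in $t$, at each fixed $(x,\xi)$, for the scalar $a_k(t,x,\xi)$, with smooth coefficients and with forcing that depends only on the already constructed $a_j$ with $j > k$. These ODEs are solvable globally in $t$ with initial data $a_0(0,\cdot) = 1$ and $a_j(0,\cdot) = 0$ for $j < 0$. A Borel summation then produces $a \in C^\infty(\RR_t; \symbcl^0)$ such that the amplitude of $(i\pa_t - P(t))\trd(t)$ is rapidly decreasing in $(x,\xi)$; by the framework of \S\ref{subsect:oscillatory}, the resulting operator then maps $\schwartz' \to \schwartz$ smoothly in $t$.

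\textbf{Technical issues and initial data.} Since $\phi_1$ is only positively homogeneous of degree $1$, it fails to be smooth at the origin of $(x,\xi)$, so to fit \eqref{eq:approximaterd} into the oscillatory-integral framework of \S\ref{subsect:oscillatory} I would replace $\phi_1$ by $\chi \phi_1$ with $\chi \in \CI(\RR^{2d})$ vanishing near the origin and equal to $1$ outside a larger ball; the difference $(1 - \chi) \phi_1$ is compactly supported, so the change to the Schwartz kernel is itself a Schwartz function and can be absorbed into the error. After this cutoff the phase has the structure $\psi_2 + \psi_1$ of \S\ref{subsect:oscillatory} with $\psi_2 = \langle x-y,\xi\rangle$ nondegenerate and $\psi_1 = \chi \phi_1$ having globally bounded derivatives of order $\geq 1$, making the oscillatory integral well-defined. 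At $t = 0$, $\phi_1(0,\cdot) = 0$ and $a(0,\cdot) - 1$ is rapidly decreasing by our choice of initial data, so the kernel of $\trd(0) - \id$ is Schwartz, providing the smoothing operator $K$. I anticipate the main obstacle to be not any single analytic step but rather the consistent symbol-level bookkeeping: checking that the ``two orders lower'' error structure of the isotropic calculus propagates through the hierarchy so that at every degree the equation genuinely reduces to a scalar ODE in $t$ for the next $a_j$, rather than to a transport equation involving spatial derivatives that would require solving along bicharacteristics.
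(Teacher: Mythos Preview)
Your approach matches the paper's: set up the eikonal--transport hierarchy for the ansatz \eqref{eq:approximaterd}, Borel sum, cut off near the origin of $(x,\xi)$ to restore smoothness, and absorb the compactly supported modification into the residual error; the initial condition is handled identically.

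The one point of divergence is the form of the transport equations. The paper writes the first one as $\partial_t a^{(0)} + \langle \xi, \nabla_x a^{(0)}\rangle = f\,a^{(0)}$ and argues global-in-$t$ solvability from the fact that the characteristics are straight lines. Your claim that at each homogeneity degree one obtains merely an ODE in $t$ at fixed $(x,\xi)$ is in fact the correct reading in the isotropic filtration for an order-$1$ operator: the would-be transport term $\partial_\xi\pbis_1\cdot D_x a_0$ has isotropic order $0+(-1)=-1$, not $0$, so it enters as forcing for $a_{-1}$ rather than as part of the homogeneous equation for $a_0$; and, as you note, the eikonal identity $\partial_t\phi_1+\pbis_1=0$ kills the cross-term $\pbis_1 a_{k-1}$ at each step, so the hierarchy really is triangular. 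Your anticipated ``main obstacle'' therefore does not arise, and your ODE formulation is the cleaner statement. Either way the crucial conclusion---solvability for all $t$, hence a parametrix valid for arbitrarily long time (the point distinguishing this lemma from the short-time construction in \cite{HeRo81})---follows immediately.
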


Note that unlike the construction of \cite{HeRo81} (which we are
adapting to our purposes), this holds for arbitrarily long time.

\begin{proof}
We seek an approximate solution to \eqref{doi-evolution} of the form \eqref{eq:approximaterd}. The starting point is the action of an isotropic pseudodifferential operator on oscillatory integral of the form \eqref{eq:approximaterd}, as in \cite{HeRo81}*{Section III} or \cite{Helffer84}*{Theorem 2.5.1}. In order to apply these results directly, first write $P(t)$ as a left quantization, 
\[
P(t) = \Op_L(\widetilde \pbis(t)),
\] 
where the homogeneous degree $1$ part of $\widetilde\pbis(t)$ is still $\pbis_1(t)$. 

Suppose that $a\in\CI(\RR_t;  \symbcl^0)$ and $\phi_1 \in \CI(\RR_t \times \RR^{2d})$. Let $b(t,x,\xi) = e^{i\phi_1(t,x,\xi)}a(t,x,\xi)$, and then define
\[
c(t,x,\xi) = e^{-i\left<x,\xi\right>}P(t)(e^{i\left< \bullet,\xi\right>}b(t,\bullet,\xi) ).
\] 
Referring to \cite{HeRo81}*{Section III}, it follows that $c$ has an asymptotic expansion
\[
c(t,x,\xi) = \sum_{|\alpha| < N} c_{\alpha}(t,x,y,\xi) + c^{(N)}(t,x,y,\xi),
\]
where $c_\alpha$ is given by the formula
\[
c_{\alpha}(t,x,\xi) = (\alpha!)^{-1} \partial_\xi^{\alpha} \widetilde \pbis(t,x,\xi)D^{\alpha}_x b(t,z,\xi).
\]
Furthermore, given $T >0$ and $t\in [-T,T]$, the remainder $c^{(N)}$ satisfies the uniform bound
\begin{equation} \label{eq:cbound}
|\partial_t^k \partial_x^{\beta} \partial_\xi^{\gamma} c^{(N)}(t,x,\xi)| \leq C_{k\beta\gamma}\left<(x,\xi)\right>^{k+1-N}.
\end{equation}
Disregarding smoothness at $(x,\xi) = 0$ at first, formally apply this result with a symbol having an asymptotic expansion
\[
\sum_{k=0}^\infty a^{(k)}(t,x,\xi),
\] 
where each $a^{(k)}(t,\bullet)$ is homogeneous of degree $-k$ outside a compact set, and $\phi(t,\bullet)$ which is homogeneous of degree $1$. Recalling that $b = e^{i\phi_1} a$ and separating terms by homogeneity, first obtain from \eqref{doi-evolution} the eikonal equation
\[
\begin{cases} 
 \pa_t \phi_1 + \pbis_1(t,x,\xi)=0, \\ \phi_1(0,x,\xi)=0.
 \end{cases} 
\]
This equation is solved by \eqref{eq:phi1}, recalling that $\pbis_1(t)=p_1\circ \exp(t\hamvf_0).$ Next, obtain a sequence of transport equations, the first of which has the form
\[
\begin{cases} 
\pa_t a^{(0)} + \langle \xi, \nabla_x a_0\rangle = f(t,x,\xi)a^{(0)}\\
a^{(0)}(0,x,\eta) = 1,
\end{cases}
\]
where $f(t,x,\xi)$ is homogeneous of degree $0$.
Observe that this equation can be solved for all time since the characteristics are straight lines.
There are similar expressions for $a^{(k)}$ (with inhomogeneous term depending on $a^{(0)},\dotsc,a^{(k-1)}$ and with vanishing initial value). Let $\widetilde{a} \in \CI(\RR_t; \RR^{2d}\setminus \{0\})$ be such that 
\begin{equation} \label{eq:borelsum}
\widetilde a(t,x,\xi) \sim  \sum_{k=0}^\infty  a^{(k)}(t,x,\xi),
\end{equation}
and then set $a(t,x,\xi) = \zeta(x,\xi)\widetilde{a}(t,x,\xi)$, where $\zeta \in \CI(\RR^{2d})$ is such that $\zeta(x,\xi) = 0$ for $|(x,\xi)| \leq 1$ and $\zeta(x,\xi) = 1$ for $|(x,\xi)| \geq 2$. Thus $a$ is everywhere smooth, and $\phi_1$ is also smooth on the support of $a$. 

Let $\trd(t)$ be given by \eqref{eq:approximaterd}, and  $\rd_N(t)$ be the corresponding integral when \eqref{eq:borelsum} is summed from $0$ to $N$. There are two errors when applying $(i\partial_t -H)$ to $\rd_N(t)$: the first arises since the eikonal and transport equations are only satisfied outside a compact set, hence the corresponding error is residual. The second error arises since the corresponding amplitude $a_N$ is only a finite sum of terms. For this we simply cite \cite{HeRo81}*{Lemma III.6} for mapping properties of the corresponding oscillatory integral with amplitude $c^{(N+1)}(t,x,\xi)$. Since $N$ is arbitrary, the proof is complete.
\end{proof}

Observe that $\trd(t)(x,y)$ is indeed the distributional kernel of an operator $\schwartz' \rightarrow \schwartz'$ as described in \S \ref{subsect:oscillatory}: clearly the quadratic form $\left<x-y,\xi\right>$ satisfies the hypotheses \eqref{eq:nondegeneratephase}, and as in the proof of Lemma \ref{lem:parametrix-perturbed} is may be assumed that $\phi_1$ is smooth on the support of $a$.

\subsection{Composition}
In this section we analyze the composition $\tprop(t) = U_0(t) \trd(t) ,$ which will
give a parametrix for $U(t)$. Observe that $\tprop(t)$ is well defined as an operator between tempered distributions, for example.

Although some information about the composition can be gleaned from the general theory in
\cite{Helffer84}*{Chapter 2}, a more precise description of the resulting phase is needed here; for this reason the calculations that follow will be explicit. Write
\begin{align*}
\trd(t) &= \int e^{\ang{(x-y,\eta} + i\phi_1(t,x,\eta)} b_1(t,x,\eta)\, d\eta, \\
U_0(t) &= \int e^{i\phi_2(t,x,\eta)- i\left<y,\eta\right>} b_2(t,x,\eta) \,d\eta,
\end{align*}
for appropriate amplitudes $b_j \in\CI(\RR_t;  \symb^0)$, where $\phi_2$ is given by \eqref{eq:mehlerphase}, and $\phi_1$ is given by \eqref{eq:phi1}.  Of course the formula for $U_0(t)$ only makes sense if $t -2\pi n \in (-\pi/2,\pi/2)$ for some $n\in \ZZ$. As remarked at the end of the previous section, it may be assumed that $\phi_1$ is smooth on the support of $b_1$.

Formally then, the composition has Schwartz kernel
\[
\tprop(t)(x,y) = \int e^{i\phi_2(t,x,\eta)-i\left< y,\eta \right> + i\phi_1(t,y,\eta)} b(t,x,y,\eta) \,d\eta,
\]
where the amplitude $b = b(t,x,y,\eta)$ is given by
\begin{align} \label{eq:compositionamplitude}
b(t,x,y,\eta) &= \int e^{i\left<z-y,\xi-\eta\right> + i(\phi_1(t,z,\xi) - \phi_1(t,y,\eta))} b_2(t,x,\eta) b_1(t,z,\xi) \,dzd\xi \notag \\
&= \int e^{i\left<z,\xi\right>} e^{i\phi_1(t,y+z,\eta+\xi) - i\phi_1(t,y,\eta)} b_2(t,x,\eta) b_1(t,y+z,\eta+\xi) \, dzd\xi.
\end{align}
In analyzing the latter integral, there is no difficulty in supposing more generally that $b_j \in\CI(\RR_t;  \symb^{m_j})$ for some $m_j \in \RR$. Since all the dependence on $t$ henceforth will be smooth and parametric, for notational simplicity the dependence on $t$ will be suppressed. Define
\[
a_0(x,y,z,\eta,\xi) = e^{i\phi_1(y+z,\eta+\xi) - \phi_1(y,\eta)} b_2(x,\eta) b_1(y+z,\eta+\xi).
\]
While $b_j$ have improved decay under differentiation for $j=1,2$, this is not the case for $a_0$ due to the homogeneous of degree $1$ phase factor. Thus 
\[
|\partial^\alpha a_0| \leq C_{\alpha}\left<(x,\eta)\right>^{m_2} \left<(y+z,\eta+\xi) \right>^{m_1}
\]
for each $\alpha$. Now integrate by parts using the operator $L = (1+|z|^2 + |\xi|^2)^{-1}(1+\Lap_z + \Lap_\xi)$ to see that 
\begin{equation} \label{eq:integratewithL}
|(L^t)^k \,\pa^\alpha _{x,y,\eta} a_0| \leq C_{\alpha k} \left<(x,\eta)\right>^{m_2} \left<(y,\eta)\right>^{m_1} \left<(z,\xi)\right>^{|m_1|-2k}.
\end{equation}
Choosing $k > d+ |m_1|/2$ shows that $b$ given by \eqref{eq:compositionamplitude} is smooth and satisfies
\[
|\partial^{\alpha} b| \leq C_{\alpha} \left<(x,\eta)\right>^{m_2} \left<(y,\eta)\right>^{m_1}
\]
for each $\alpha$. 

This result must be improved to include symbol bounds when $x=y$;  this is important when taking the distributional trace of $\tprop(t)$. 

\begin{lemma} \label{lem:composition}
	The pullback of the amplitude $b$ by the map $(t,x,\eta) \mapsto (t,x,x,\eta)$ lies in $\CI(\RR_t; \symb^{m_1+m_2})$.
\end{lemma}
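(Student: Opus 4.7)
I will show that the restriction
\begin{equation*}
b(t,x,x,\eta) = \int e^{i\langle z,\xi\rangle}\, c(t,x,\eta,z,\xi)\, dz\, d\xi,
\end{equation*}
where $c(t,x,\eta,z,\xi) = e^{i[\phi_1(t,x+z,\eta+\xi) - \phi_1(t,x,\eta)]}\, b_2(t,x,\eta)\, b_1(t,x+z,\eta+\xi)$, defines a smooth family of symbols of order $m_1 + m_2$ in $(x,\eta)$. The strategy exploits two features: the quadratic form $\langle z,\xi\rangle$ has a nondegenerate critical point at $(z,\xi) = 0$, and the \emph{difference} $\phi_1(x+z,\eta+\xi) - \phi_1(x,\eta)$ vanishes at $(z,\xi) = 0$, compensating for the fact that $\phi_1$ itself has order $1$.

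For the pointwise bound, I would apply the integration-by-parts operator $L = (1+|z|^2+|\xi|^2)^{-1}(1+\Delta_z + \Delta_\xi)$ used above \eqref{eq:integratewithL}. Since on the diagonal $\langle(x+z,\eta+\xi)\rangle^{m_1} \leq C \langle(x,\eta)\rangle^{m_1}\langle(z,\xi)\rangle^{|m_1|}$, enough applications of $L^t$ render the integral absolutely convergent and yield $|b(t,x,x,\eta)| \leq C \langle(x,\eta)\rangle^{m_1+m_2}$. Smooth dependence on $t$ is inherited from that of $b_j$ and $\phi_1$ via dominated convergence.

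The substantive step is to produce decay under differentiation. I would differentiate under the integral sign and distribute each $\partial_x$ or $\partial_\eta$ by Leibniz among the three factors of $c$. Derivatives of $b_2(x,\eta)$ give the expected decay since $b_2 \in \symb^{m_2}$. Derivatives of $b_1(x+z,\eta+\xi)$ in $x$ or $\eta$ coincide, by the chain rule, with derivatives in $z$ or $\xi$, which by integration by parts against $e^{i\langle z,\xi\rangle}$ become multiplication by $\xi$ or $z$, absorbed by additional $L^t$. The delicate case is the phase factor: differentiating in $x_j$ produces the prefactor $i[\partial_{x_j}\phi_1(x+z,\eta+\xi) - \partial_{x_j}\phi_1(x,\eta)]$, which I would rewrite via the fundamental theorem of calculus as
\begin{equation*}
\int_0^1 (z,\xi) \cdot \nabla \partial_{x_j}\phi_1(x + sz, \eta + s\xi)\, ds.
\end{equation*}
Since $\nabla \partial_{x_j}\phi_1$ is homogeneous of degree $-1$, this contributes the needed $\langle(x,\eta)\rangle^{-1}$ (for $|(z,\xi)|$ small compared to $|(x,\eta)|$), while on the complementary region the same factor is available from the $L^t$-decay in $(z,\xi)$. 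The extra factor of $(z,\xi)$ produced by the FTC trick is absorbed by another integration by parts against $\langle z,\xi\rangle$.

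The main obstacle is the bookkeeping when iterating for higher derivatives: derivatives of $\phi_1$ are only in $\symb^0$, so on their own they supply no decay. The FTC trick applied at each differentiation ensures that the extra factors of $(z,\xi)$ produced through Leibniz expansions are always available for absorption by integration by parts, giving a net loss of exactly one order in $\langle(x,\eta)\rangle$ per derivative in $(x,\eta)$, with corresponding $t$-regularity from the smoothness of the ingredients. Combined with the pointwise bound of the first step, this gives $b(t,\cdot,\cdot,\cdot)|_{y = x} \in \CI(\RR_t;\symb^{m_1+m_2})$, as claimed.
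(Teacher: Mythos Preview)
Your overall strategy---direct estimation via the regularizing operator $L^t$ together with the FTC trick for the phase difference---is a legitimate alternative to the paper's proof. The paper instead rescales, setting $b_\lambda(x,\eta) = b(\lambda^{1/2}x,\lambda^{1/2}x,\lambda^{1/2}\eta)$ on the shell $1 \leq |(x,\eta)| \leq 2$, and applies stationary phase in $(z,\xi)$ with large parameter $\lambda$ to the phase $\Phi_\mu = z\xi + \mu\bigl(\phi_1(z+x,\xi+\eta) - \phi_1(x,\eta)\bigr)$ with $\mu = \lambda^{-1/2}$; the critical point sits at $(z,\xi) = O(\mu)$, and the powers of $\lambda$ arising when $(x,\eta)$-derivatives hit the exponential cancel against this smallness. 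Your route is more hands-on and avoids stationary phase altogether, which is a virtue.

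There is, however, a genuine gap in your treatment of the Leibniz term where a derivative lands on $b_1$. Converting $\partial_{x_j}\bigl[b_1(x+z,\eta+\xi)\bigr] = \partial_{z_j}\bigl[b_1(x+z,\eta+\xi)\bigr]$ and integrating by parts against $e^{i\langle z,\xi\rangle}$ does \emph{not} produce the required $\langle(x,\eta)\rangle^{-1}$: the factor of $\xi_j$ you obtain is handled by $L^t$ only in the $(z,\xi)$-variables, and the integration by parts also differentiates $e^{i\Delta\phi_1}$, producing the order-zero factor $i(\partial_{x_j}\phi_1)(x+z,\eta+\xi)$, which contributes no decay either. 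The repair is simpler than your detour: use directly that $b_1 \in \symb^{m_1}$, so $|(\partial_{x_j}b_1)(x+z,\eta+\xi)| \leq C\langle(x+z,\eta+\xi)\rangle^{m_1-1}$; on the region $|(z,\xi)| \leq \tfrac12 |(x,\eta)|$ (the complement already being rapidly decaying by your own $L^t$ argument) this is comparable to $\langle(x,\eta)\rangle^{m_1-1}$, which is exactly what you need. With this correction, and the analogous bookkeeping for higher derivatives, your scheme closes.
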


\begin{proof}
As in the previous paragraph the smooth dependence on $t$ will follow immediately by differentiating under the integral sign, and so to simplify notation the dependence on $t$ will be again be dropped. 

First, observe that it suffices to consider the integral \eqref{eq:compositionamplitude} over $|(z,\xi)| \leq  (1/2)|(x,\eta)|$, since on the complement $b(x,x,\eta)$ is rapidly decaying in $(x,\eta)$ by \eqref{eq:integratewithL}. So now define
\[
b_{\lambda}(x,\eta) = b(\lambda^{1/2}x,\lambda^{1/2}x,\lambda^{1/2}\eta),
\] 
where $1 \leq |(x,\eta)| \leq 2$. In order to prove the lemma it suffices to show the uniform bounds
\begin{equation} \label{eq:rescaledsymbolbounds}
\left|\pa^\alpha_{x,\eta} b_\lambda(x,\eta)\right|
\leq C_{\alpha} \lambda^{(m_1 + m_2)/2}
\end{equation}
as $\lambda \rightarrow \infty$. For this, define 
\[
g_{\lambda}(z,\xi, x,\eta) = a(\lambda^{1/2}x,\lambda^{1/2}\eta) b(\lambda^{1/2}(x+z),\lambda^{1/2}(\eta+\xi)),
\]
noting that
\begin{equation} \label{eq:boundonA}
|\partial^\alpha g_{\lambda}(z,\xi, x,\eta) | \leq C_{\alpha}\lambda^{(m_1
	+ m_2)/2}
\end{equation}
uniformly in $1 \leq |(x,\eta)| \leq 2$ and $|(z,\xi)| \leq 1/2$. A Taylor expansion of $\phi_1(z+x,\xi + \eta)$ at $(x,\eta)$ yields
\begin{align*}
\phi_1(z+x,\xi+\eta) &= \phi_1(x,\eta) + \left<z, \pa_x\phi_1(x,\eta)\right> + \left<\xi, \pa_\eta \phi_1(x,\eta)\right> \\ &+ \sum_{|\alpha| = 2} (z,\xi)^\alpha f_\alpha(x,z,\eta,\xi)
\end{align*}
for some smooth functions $f_\alpha$, so if we define
\[
\Phi_\mu(x,z,\eta,\xi) = z\xi + \mu\phi_1(z+x,\xi+\eta) - \mu\phi_1(x,\eta)
\]
for a parameter $\mu \in \RR$, then
\begin{align*}
\Phi_\mu = z\xi + \mu \big( z \pa_y\phi_1(y,\eta) + \xi \pa_\eta \phi_1(y,\eta) + \sum_{|\alpha| = 2} (z,\xi)^\alpha f_\alpha(z,\xi,x,\eta) \big).
\end{align*}
Using homogeneity of the phase, the rescaled amplitude $b_{\lambda}(x,\eta)$ can be written via a change of variables as
\begin{equation} \label{eq:amplitudeintegral} 
b_{\lambda}(x,\eta) = \lambda^d\int e^{i\lambda \Phi_{\mu}}  g_{\lambda}(z,\xi, x,\eta) \,dzd\xi
\end{equation}
by setting $\mu = \lambda^{-1/2}$. Let $C_\mu = \{ d_{z,\xi}\Phi_\mu = 0\}$ denote the set of
stationary points; thus $(z,\xi) \in C_\mu$ if and only if
\begin{align*}
\xi + \mu \pa_z \phi_1(z+x,\xi+\eta) &= 0,\\
z + \mu\pa_\xi \phi_1(z+x,\xi+\eta) &= 0.
\end{align*}
By the implicit function theorem, we can parametrize $(z,\xi)$ by $(\mu,x,\eta)$ near any fixed $(x_0,\eta_0)$ for $|\mu|$ sufficiently small, and obtain
\begin{align*}
|z(\mu,x,\eta)| + |\xi(\mu,x,\eta)| \leq C|\mu|.
\end{align*}
In particular these points satisfy $|(z,\xi)| \leq 1/2$ for $|\mu|$ sufficiently small and $1 \leq |(x,\eta)| \leq 2$, hence the derivative bounds \eqref{eq:boundonA} for
$g_{\lambda}$ will apply.

We can now estimate the integral \eqref{eq:amplitudeintegral} and its derivatives, initially treating $\mu$ as a parameter; assume without loss that $g_{\lambda}(z,\xi,x,\eta)$ vanishes for $|(z,\xi)| \geq 1/3$. Consider a typical derivative $\pa_{x,\eta}^\gamma g_{\lambda}$. This is a sum of terms,
where those with $\ell \leq \abs{\gamma}$ derivatives landing on the exponential factor can be
written as
\begin{equation}\label{derivsofamplitude}    
\begin{aligned}
\lambda^{d} (\lambda \mu)^{\ell} \int e^{i \lambda \Phi_{\mu}} \left(\pa^{\gamma'}_{x,\eta} g_{\lambda}\right) \sum_{\abs{\beta}=\ell} (z,\xi)^\beta h_\beta \, dz d\xi
\end{aligned}
\end{equation}
for some smooth functions $h_\beta = h_\beta(z,\xi, y,\eta,\mu)$ and $\abs{\gamma'}\leq \abs{\gamma}$. 

Now apply the method of stationary phase, recalling the bounds \eqref{eq:boundonA}.
At the critical set $C_\mu$, each term $(z,\xi)^\beta h_\beta(z,\xi,y,\eta)$ in \eqref{derivsofamplitude} gives an additional factor
of order $O(|\mu|^{\ell/2})$, since both critical points $z(\mu,y,\eta),\, \xi(\mu,y,\eta)$ are of
order $O(|\mu|)$. When $\mu = \lambda^{-1/2}$ this cancels with the factor
of $\lambda^{\ell/2}$ in front of the integral in
\eqref{derivsofamplitude}.  The stationary phase formula
eliminates the prefactor of $\lambda^d$, showing that
\[
|\partial^\alpha b_\lambda(x,\eta)| = O(\lambda^{(m_1 + m_2)/2})
\]
near $(x_0,\eta_0)$. Since the set where $1 \leq |(x,\eta)| \leq 2$ is compact, this implies the symbol estimates \eqref{eq:rescaledsymbolbounds} everywhere on the latter set.
\end{proof}

More generally, Lemma \ref{lem:composition} is true whenever $\phi_2$ is a quadratic form satisfying \eqref{eq:nondegeneratephase} and $\phi_1$ is homogeneous of degree $1$.

\begin{corollary} \label{cor:parametrix}
    If $t-2\pi n \in (-\pi/2,\pi/2)$ for some $n \in\ZZ$, then the Schwartz kernel of $\tprop(t)$ is given by an oscillatory integral
    \begin{align*}
        \tprop(t,x,y) = \int e^{i\phi_2(t,x,\eta) -i \left<y,\eta \right> + i\phi_1(t,x,\eta)} b(t,x,y,\eta)\, d\eta,
    \end{align*}
    where $\phi_2$ is given by \eqref{eq:mehlerphase}, and $\phi_1$ is given by \eqref{eq:phi1}.
    The pullback of $b \in \CI(\RR_t \times \RR^{3d})$ by the map $(t,x,\eta) \mapsto (t,x,x,\eta)$ lies in $\CI(\RR_t;\symb^0)$.
\end{corollary}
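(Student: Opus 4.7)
The plan is to obtain the corollary essentially as a cosmetic rewriting of Lemma \ref{lem:composition}, since the substantive symbolic work has already been carried out there. The only additional task is to rearrange the phase so that $\phi_1$ is evaluated at $x$ rather than $y$, which is the form convenient for later trace computations.

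First I would apply Lemma \ref{lem:composition} directly to $\tprop(t) = U_0(t)\trd(t)$, where the inputs are $b_1 \in \CI(\RR_t;\symbcl^0)$, the amplitude of $\trd(t)$ from Lemma \ref{lem:parametrix-perturbed}, and $b_2 = (-1)^n (2\pi)^{-d}\cos(t)^{-1/2}$, the $(x,\eta)$-constant amplitude of $U_0(t)$ supplied by Mehler's formula; both lie in $\symb^0$, and the assumption $t - 2\pi n \in (-\pi/2,\pi/2)$ is precisely what makes the Mehler representation valid. The formal composition carried out in the paragraph preceding Lemma \ref{lem:composition} yields
\[
\tprop(t)(x,y) = \int e^{i\phi_2(t,x,\eta) - i\langle y,\eta\rangle + i\phi_1(t,y,\eta)} \tilde b(t,x,y,\eta)\, d\eta,
\]
with $\tilde b \in \CI(\RR_t\times\RR^{3d})$ and, by Lemma \ref{lem:composition} applied with $m_1 + m_2 = 0$, with diagonal pullback $\tilde b(t,x,x,\eta) \in \CI(\RR_t;\symb^0)$.

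I would then absorb the difference of phases into a new amplitude by setting
\[
b(t,x,y,\eta) := e^{i[\phi_1(t,y,\eta) - \phi_1(t,x,\eta)]}\,\tilde b(t,x,y,\eta),
\]
which converts the representation above into the one claimed in the corollary. On the diagonal $x=y$ the exponential factor collapses to $1$, so the pullback $b(t,x,x,\eta) = \tilde b(t,x,x,\eta)$ inherits the required symbol class. The main (mild) obstacle is the global smoothness of $b$ on $\RR_t \times \RR^{3d}$: since $\phi_1$ is homogeneous of degree $1$, it is in general only smooth away from the origin. I would handle this exactly as in the proof of Lemma \ref{lem:parametrix-perturbed}, replacing $\phi_1$ in the exponential prefactor by $\zeta\phi_1$ for the same excision cutoff $\zeta$ vanishing near $0$. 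This modification is everywhere smooth, agrees with $\phi_1$ outside a compact set, and changes $\tprop(t)$ only by a smoothing operator whose kernel can be harmlessly absorbed into $b$; neither the phase nor the diagonal symbol assertion is affected.
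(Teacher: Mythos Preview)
Your proposal is correct and follows essentially the same approach as the paper, which simply states that the corollary follows directly from Lemma~\ref{lem:composition}. The paper does not even comment on the switch from $\phi_1(t,y,\eta)$ (appearing in the composition formula preceding Lemma~\ref{lem:composition}) to $\phi_1(t,x,\eta)$ in the statement of the corollary; you are being more careful here, and your absorption of the phase difference into the amplitude is the natural way to justify this, the key point being that on the diagonal $x=y$ the extra factor is identically $1$ and so does not affect the symbol conclusion.
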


\begin{proof}
    This follows directly from Lemma \ref{lem:composition}.
\end{proof}

Let $R(t) = (i\partial_t - P(t))\trd(t)$. A brief calculation shows that $\tprop(t)$ satisfies the equation
\begin{equation}\label{eq:UNequation}
\left\{\begin{aligned}
(i\pa_t - H ) \tprop(t) &= U_0(t)R(t),\\
\tprop(0) &= \id + K.
\end{aligned}\right.
\end{equation}
It follows by Duhamel's principle that 
\begin{equation} \label{eq:duhamel}
\tprop (t) - U(t) = U(t) K - i \int_0^t U(t-s)U_0(t)R(t) \, ds.
\end{equation}
Recall that $U_0(t)$ and $U(t)$ both preserves the scale of isotropic Sobolev spaces. Since $R(t)$ is a smooth family of residual operators and $K$ is residual, it follows immediately from \eqref{eq:duhamel} that 
\begin{equation} \label{eq:parametrixaccuracy}
\widetilde{R}(t) = \tprop(t)- U(t) \in \CI(\RR_t;\mathcal{L}(\sob^{-N},\sob^N))
\end{equation}
for each $N$.

As in Lemma \ref{lem:parametrix-perturbed}, there is no loss in assuming that the amplitude $b(t,x,y,\eta)$ in $\tprop(t)$ is supported away from $(x,y,\eta)=0$: inserting a cutoff modifies $\tprop(t)$ by a residual operator which does not affect the error analysis above. In particular, it may be assumed that $\phi_1$ is smooth on the support of $b$.

\subsection{Propagation of classical singularities}
Let $u \in \mathcal{E}' + \schwartz$. We want to calculate the classical wavefront set $\WF(U(t)u)$ of $u$.

By Proposition~\ref{prop:isoWF}, if $\{ (0,\xi): \xi \in \RR^d\} \cap \WFiso(u) =
\emptyset$ then $u \in \CI.$
By Proposition \ref{prop:prop}, $u(t,\bullet) \in \CI$ except at times
$t \in \pi \ZZ,$ and at those times, we also know that
\[\WF(U_0(k\pi)u) = \{(-1)^k(x,\xi): (x,\xi) \in \WF(u)\}.\]
It remains to calculate how singularities are moved by the reduced propagator $F(t)$.

We now assume more generally that $u \in \schwartz'$.
Equation \eqref{eq:parametrixaccuracy} (and preceding discussion) implies that the parametrix constructed in Lemma \ref{lem:parametrix-perturbed} satisfies
$\rd - \trd \in \CI(\RR_t, \mathcal{L}(\schwartz', \schwartz))$.
The classical wavefront set is thus completely determined by the parametrix:
\begin{align*}
    \WF(\rd(t)u) = \WF( \trd(t) u + (\rd(t) - \trd(t))u) = \WF(\trd(t)u),
\end{align*}
because $(\rd(t) - \trd(t))u \in \schwartz \subset \CI$.

Recall that
\begin{align*}
    \trd(t) &= \int e^{i(\ang{x-y,\xi} + \phi_1(t,x,\xi))} a(t,x,\xi) d\xi\\
    &= \int e^{i\phi(t,x,y,\xi)} \tilde{a}(t,x,\xi) d\xi,
\end{align*}
with $\phi \equiv \ang{x-y,\xi} + \phi_1(t,0,\xi)$ and
\[\tilde{a}(t,x,\xi) \equiv e^{i(\phi_1(t,x,\xi) - \phi_1(t,0,\xi))} a(t,x,\xi).\]
Note that $\phi$ is homogeneous of degree one in $\xi$ and since, locally, $\phi_1(t,x,\xi) - \phi_1(t,0,\xi) \in S^0$ we see that
the amplitude is (locally) a Kohn-Nirenberg 1-symbol, $\tilde{a} \in S^0$.
Thus, the oscillatory integral $\trd(t)$ satisfies the assumptions of
Theorem 8.1.9 from \cite{Hormander1}, and we obtain the following:
\begin{proposition}\label{prop:WFF}
    The wavefront set of the integral kernel of $F(t)$ is given by
    \begin{align*}
        \WF(F(t)) \subset \left\{ (x,x-\pa_\xi \phi_1(t,0,\xi),\xi,-\xi) : x\in\RR^d, \xi \in \RR^d\setminus \{0\} \right\}.
    \end{align*}
\end{proposition}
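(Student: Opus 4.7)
The proof proceeds in three steps. First, by the discussion immediately preceding the statement, $F(t) - \trd(t) \in \CI(\RR_t, \mathcal{L}(\schwartz',\schwartz))$, so its Schwartz kernel is smooth and contributes nothing to the wavefront set; it therefore suffices to compute $\WF(\trd(t))$, where $\trd(t)$ is the parametrix from Lemma~\ref{lem:parametrix-perturbed}. Second, I would rewrite this kernel so that the coupling between $x$ and $\xi$ lives in the amplitude rather than the phase: setting
\[
\widetilde a(t,x,\xi) := e^{i(\phi_1(t,x,\xi) - \phi_1(t,0,\xi))}\,a(t,x,\xi), \qquad \phi(t,x,y,\xi) := \langle x - y,\xi\rangle + \phi_1(t,0,\xi),
\]
one has $\trd(t)(x,y) = \int e^{i\phi(t,x,y,\xi)}\,\widetilde a(t,x,\xi)\,d\xi$. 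The new phase is smooth on the support of $\widetilde a$ (which avoids $\xi = 0$) and positively homogeneous of degree one in $\xi$, hence a H\"ormander phase function in the conormal variable $\xi$.

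The main obstacle lies in verifying that $\widetilde a$ is, locally in $x$, a Kohn--Nirenberg symbol of order $0$ in $\xi$ in the sense of \eqref{eq:KN}. This uses crucially the joint homogeneity of degree one of $\phi_1$ in $(x,\xi)$: a rescaling $\xi = |\xi|\hat\xi$ gives
\[
\phi_1(t,x,\xi) - \phi_1(t,0,\xi) = |\xi|\bigl[\phi_1(t,x/|\xi|,\hat\xi) - \phi_1(t,0,\hat\xi)\bigr],
\]
which is $O(|x|)$ for $|\xi|$ large, and similarly each $\xi$-derivative of the difference is homogeneous of degree zero in $(x,\xi)$ and vanishes at $x=0$, producing an additional factor of $\langle\xi\rangle^{-1}$ on compact sets in $x$ where $\langle(x,\xi)\rangle \asymp \langle\xi\rangle$. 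Combined with the isotropic symbol estimates on $a \in \symbcl^0$, this yields the required Kohn--Nirenberg bounds on $\widetilde a$.

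With the kernel placed in this form, Theorem 8.1.9 of \cite{Hormander1} bounds $\WF(\trd(t))$ by the image of the critical set $\{d_\xi\phi = 0,\ \xi \neq 0\}$ under the map $(x,y,\xi) \mapsto (x,y,d_x\phi,d_y\phi)$. A direct computation yields $d_\xi\phi = x - y + \pa_\xi \phi_1(t,0,\xi)$, $d_x\phi = \xi$, and $d_y\phi = -\xi$, so the critical set parametrizes precisely the set appearing in the statement.
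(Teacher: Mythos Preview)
Your proposal is correct and follows essentially the same approach as the paper: the paper likewise reduces to $\trd(t)$, rewrites the kernel with phase $\phi(t,x,y,\xi)=\langle x-y,\xi\rangle+\phi_1(t,0,\xi)$ and amplitude $\tilde a=e^{i(\phi_1(t,x,\xi)-\phi_1(t,0,\xi))}a$, observes that locally $\phi_1(t,x,\xi)-\phi_1(t,0,\xi)\in S^0$ so that $\tilde a\in S^0$, and then invokes \cite{Hormander1}*{Theorem 8.1.9}. Your symbol verification is in fact more detailed than what the paper records.
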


If we want to calculate the wavefront set of $\rd(t)u$ for $u \in \schwartz'$ we have to show that there are no contributions to wavefront set coming from infinity.
Fix $t_0 \in \RR$ and let $K \subset \RR^d$ compact with $\chi_1 \in
\CcI(K);$ set
\[r = \max_{(x,\xi) \in K \times \RR^d} |x + \pa_\xi \phi_1(t_0,x,\xi)|.\]
Note that $\pa_\xi \phi_1$ is homogeneous of degree zero in $(x,\xi)$ and therefore $r < \infty$.
Let $\chi_2 \in \CI(\RR^d)$ with $\supp \chi_2 \cup B_{r+1}(0) = \emptyset$ and homogeneous of degree zero outside of $B_{r+2}(0)$.

It suffices to show that $\chi_1(x)\chi_2(y) \trd(t_0,x,y) \in \schwartz(\RR^{2d})$.
Set $\Phi = \ang{x-y,\xi} + \phi_1(t_0,x,\xi)$ and define the operator $L$ by
\begin{align*}
    Lu = \frac{\ang{\pa_\xi \Phi, D_\xi u}}{\abs{\pa_\xi\Phi}^2}.
\end{align*}
$L$ is well-defined on $\supp \chi_1(x)\chi_2(y)$ and satisfies $L e^{i\Phi} = e^{i\Phi}$ and
for all $a \in \symb^m$ and $N \in \NN$,
\begin{align*}
    \abs{(L^t)^N a(x,\xi)} &\leq C \ang{x-y + \pa_\xi\phi_1(t_0)}^{-N}\ang{(x,\xi)}^{m-N}\\
    &\leq C \ang{y}^{-N} \ang{\xi}^{m-N}.
\end{align*}
Integration by parts with this operator shows that $\chi_1(x)\chi_2(y)
\trd(t_0,x,y)$ and all its derivatives are rapidly decaying, hence for
any $u \in \schwartz',$ we know that $\WF F(t) u\cap \pi^{-1} K$ is determined by
the restriction of $u$ to $B_{r+1}(0),$ and is as follows:
\begin{proposition}
   For $u \in \schwartz',$
    \begin{align*}
        \WF(F(t)u) = \left\{(x-\pa_\xi \phi_1(t,0,\xi),\xi)\colon (x,\xi) \in \WF(u)\right\}.
    \end{align*}
\end{proposition}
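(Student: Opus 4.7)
The plan is to combine Proposition~\ref{prop:WFF}, the residuality of $F(t) - \trd(t)$ from~\eqref{eq:parametrixaccuracy}, and the localization estimate for $\chi_1 \trd(t) \chi_2$ established in the paragraphs immediately preceding the proposition, in order to evaluate $\WF(F(t)u)$ as a classical H\"ormander-type composition. The upper inclusion is the main content; the reverse follows by an identical argument applied to the inverse reduced propagator, using that the underlying canonical relation is a bijection.

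For the upper bound, fix $x_0 \in \RR^d$ together with a compact neighborhood $K$ and a cutoff $\chi_1 \in \CcI(K)$. Set
\[
r = \max_{x \in K,\ \xi \in \RR^d \setminus 0} \bigl|x - \pa_\xi \phi_1(t,0,\xi)\bigr|,
\]
which is finite because $\pa_\xi \phi_1(t,0,\cdot)$ is homogeneous of degree zero in $\xi$, and choose $\chi \in \CcI(\RR^d)$ equal to $1$ on $\overline{B_{r+1}(0)}$. Writing $u = \chi u + (1-\chi)u$, the integration-by-parts computation preceding Proposition~\ref{prop:WFF} combined with \eqref{eq:parametrixaccuracy} shows that $\chi_1 F(t)(1-\chi)u \in \schwartz$, hence $\chi_1 F(t)u \equiv \chi_1 F(t)(\chi u)$ modulo $\CI$ on $K$. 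Since $\chi u \in \mathcal{E}'$, H\"ormander's composition rule for wavefront sets (\cite{Hormander1}*{Theorem 8.2.13}), combined with the description of $\WF(F(t))$ from Proposition~\ref{prop:WFF}, yields
\[
\WF(F(t) u) \cap \bigl(K \times (\RR^d \setminus 0)\bigr) \subset \bigl\{\bigl(x - \pa_\xi \phi_1(t,0,\xi),\,\xi\bigr) : (x,\xi) \in \WF(u)\bigr\}.
\]
Allowing $x_0$ to range over $\RR^d$ gives the $\subset$ inclusion globally.

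For the reverse inclusion, denote the canonical map by $T \colon (x,\xi) \mapsto (x - \pa_\xi \phi_1(t,0,\xi), \xi)$, a bijection of $\RR^d \times (\RR^d \setminus 0)$ with inverse $T^{-1}(x,\xi) = (x + \pa_\xi \phi_1(t,0,\xi), \xi)$. The parametrix construction of Lemma~\ref{lem:parametrix-perturbed} applied to the inverse propagator $F(t)^{-1}$ produces an operator with phase $\langle x-y,\xi\rangle - \phi_1(t,0,\xi)$; running the upper-bound argument verbatim with this parametrix yields $\WF(F(t)^{-1} v) \subset T^{-1}(\WF(v))$ for every $v \in \schwartz'$. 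Taking $v = F(t)u$ and using $u \equiv F(t)^{-1} F(t) u$ modulo $\schwartz$ then gives $T(\WF(u)) \subset \WF(F(t)u)$, the desired reverse inclusion.

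The main obstacle is the passage from $\mathcal{E}'$ to $\schwartz'$: H\"ormander's composition rule is formulated most naturally for compactly supported distributions, and one must exclude the possibility that the spatial tails of $u$ create additional singularities through the non-compactly supported kernel of $\trd(t)$. The cutoff argument above handles this, relying crucially on the degree-zero homogeneity of $\pa_\xi \phi_1(t,0,\cdot)$ in $\xi$ to yield the finite bound $r$ depending locally uniformly on $x$.
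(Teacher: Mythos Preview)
Your argument is correct and, for the inclusion $\WF(F(t)u)\subset T(\WF(u))$, is exactly the paper's approach: both the paper and you localize via the $\chi_1,\chi_2$ integration-by-parts estimate established just before the proposition, reduce to $u\in\mathcal{E}'$, and then apply the standard H\"ormander composition rule for wavefront sets together with Proposition~\ref{prop:WFF}. (One cosmetic slip: your $r$ should be $\max_{x\in K,\,\xi}|x+\pa_\xi\phi_1(t,0,\xi)|$, matching the paper's bound on $\pa_\xi\Phi$; this does not affect the argument since both quantities are finite.)

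For the reverse inclusion there is a small difference in packaging. The paper simply uses unitarity, $F(t)^*F(t)=\id$, and reads off
\[
\WF(F(t)^*v)\subset\bigl\{(x,\xi):(x-\pa_\xi\phi_1(t,0,\xi),\xi)\in\WF(v)\bigr\}
\]
directly from Proposition~\ref{prop:WFF} via the transposed kernel relation; no new parametrix is needed. You instead rebuild a parametrix for $F(t)^{-1}$ from Lemma~\ref{lem:parametrix-perturbed} with phase $\langle x-y,\xi\rangle-\phi_1(t,0,\xi)$ and rerun the upper-bound argument. Since $F(t)^{-1}=F(t)^*$ and $\trd(t)^*$ already is such a parametrix (with exactly that phase after absorbing the $y$-dependent part of $-\phi_1(t,y,\xi)$ into the amplitude), the two routes are equivalent; the paper's version just saves the extra construction. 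Note also that $F(t)^{-1}F(t)u=u$ exactly, so the ``modulo $\schwartz$'' in your last step is unnecessary.
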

\begin{proof}
The usual calculus of wavefront sets, together with
Proposition~\ref{prop:WFF}, shows that
\begin{equation}\label{WFcontain}    \begin{aligned}
        \WF(F(t)u) \subset \left\{ (x-\pa_\xi \phi_1(t,0,\xi),\xi)\colon (x,\xi) \in \WF(u)\right\}.
    \end{aligned}
\end{equation}
It remains to upgrade this containment of sets to equality.  To do this, we
simply observe that by the calculus of wavefront sets and a second
use of Proposition~\ref{prop:WFF},
$$
\WF F^*(t)u \subset  \left\{ (x,\xi)\colon (x-\pa_\xi \phi_1(t,0,\xi),\xi) \in \WF(u)\right\}.
$$
On the other hand $F(t)^* F(t) =I,$ hence the containment in
\eqref{WFcontain} must have
been equality.
\end{proof}

\begin{corollary}
    Let $u \in \schwartz'$ and $k \in \ZZ$. The wavefront set of the full propagator is given by
    \begin{align*}
        \WF(U(\pi k)u) = \left\{(-1)^k\left(x+\int_0^{\pi k} \pa_\xi (\pbis_1(t,0,\xi)) dt,\xi\right)\colon (x,\xi) \in \WF(u)\right\}.
    \end{align*}
    If $t \not \in \pi \ZZ$ and $u \in \mathcal{E}' + \schwartz$ then $\WF(U(t)u) = \emptyset$.
\end{corollary}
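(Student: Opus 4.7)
The plan is to decompose $U(t) = U_0(t) F(t)$ and combine the description of $\WF(F(t)u)$ from the preceding proposition with the explicit action of $U_0$ on wavefront sets via Egorov's theorem. The two claims of the corollary are handled separately because the behavior of $U_0(t)$ depends strongly on whether $t$ is a multiple of $\pi$: at such times the free flow maps the sphere to itself and the reconstruction of singularities is exact, while at other times singularities of data in $\mathcal{E}' + \schwartz$ escape to infinity in a sense made precise by isotropic wavefront set.

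For the first statement, I would start from
$$\WF(F(\pi k)u) = \{(x - \pa_\xi \phi_1(\pi k, 0, \xi), \xi) \colon (x,\xi) \in \WF(u)\},$$
which is the content of the preceding proposition, valid for all $u \in \schwartz'$. The definition $\phi_1(t,x,\xi) = -\int_0^t \pbis_1(s,x,\xi)\, ds$ gives $-\pa_\xi \phi_1(\pi k, 0, \xi) = \int_0^{\pi k} \pa_\xi \pbis_1(s,0,\xi)\, ds$, which matches the shift appearing in the claim. It then remains to compose with $U_0(\pi k)$; the exact Egorov property of the Weyl calculus combined with $\exp(\pi k \hamvf_0)(x,\xi) = (-1)^k(x,\xi)$ gives $\WF(U_0(\pi k)v) = \{(-1)^k(y,\xi) \colon (y,\xi) \in \WF(v)\}$, as already noted earlier in this section. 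Applying this to $v = F(\pi k) u$ yields the claimed description of $\WF(U(\pi k)u)$.

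For the second statement, fix $t \notin \pi\ZZ$ and $u \in \mathcal{E}' + \schwartz$. The key observation is that any such $u$ has compact spatial support modulo a Schwartz term, so that as $|(x,\xi)| \to \infty$ along any point in the support of its microlocalization, only $|\xi|$ can grow; consequently $\WFiso(u)$ is contained in the vertical fiber $\{(0,\hat\xi) \colon \hat\xi \in \Sph^{d-1}\} \subset \Sph^{2d-1}$. Proposition~\ref{prop:prop} then gives $\WFiso(U(t)u) = \exp(t\hamvf_0)\WFiso(u)$, and the explicit formula for the flow shows this set lies in $\{(\sin(t)\hat\xi, \cos(t)\hat\xi) \colon \hat\xi \in \Sph^{d-1}\}$. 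Since $\sin(t) \neq 0$ for $t \notin \pi\ZZ$, this image is disjoint from the vertical fiber, so Proposition~\ref{prop:isoWF} gives $U(t)u \in \CI$, i.e., $\WF(U(t)u) = \emptyset$.

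The only substantive point requiring verification is the containment $\WFiso(u) \subset \{(0,\hat\xi)\}$ for $u \in \mathcal{E}' + \schwartz$, which follows from compact spatial support: localizing with a symbol $a \in \symb^0$ whose essential support avoids the vertical fiber forces $|x| \gtrsim |(x,\xi)|$ there, so the localization eventually leaves any bounded set in $x$ and annihilates $u$ modulo a Schwartz error. Everything else is bookkeeping with the propositions already established.
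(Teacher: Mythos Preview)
Your proposal is correct and follows essentially the same approach as the paper. The paper does not give a separate formal proof of this corollary; it assembles exactly the ingredients you list---the preceding proposition for $\WF(F(t)u)$, the formula $\WF(U_0(k\pi)v)=\{(-1)^k(x,\xi):(x,\xi)\in\WF(v)\}$, and the combination of Proposition~\ref{prop:prop} with Proposition~\ref{prop:isoWF} to handle $t\notin\pi\ZZ$---in the discussion immediately preceding the statement, so your write-up is effectively a clean packaging of that argument.
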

For $t = 2\pi k$ this becomes \[\WF(U(2\pi k)u) = \{ (x + k\pa_\xi
(\Xray p_1)(0,\xi), \xi)\colon (x,\xi) \in \WF(u)\}.\]

\subsection{Traces} \label{subsect:traces} Recall that $\Tr U(t)$ is well defined as a tempered distribution. More precisely, if $\chi \in \schwartz(\RR)$, then the Schwartz kernel of
\begin{equation} \label{eq:tracekernel}
\int \chi(t) U(t) \, dt
\end{equation}
lies in $\schwartz(\RR^{2d})$, hence the operator is of trace-class. Indeed, if $\{e_j\}$ is an orthonormal basis for $L^2(\RR^d)$ consisting of eigenvectors of $H$ with corresponding eigenvalues $\lambda_j$, then \eqref{eq:tracekernel} has Schwartz kernel
\[
\sum_{j = 0}^\infty \hat \chi(\lambda_j) e_j(x) e_j(y),
\]
which converges in $\schwartz(\RR^{2d})$ since $\hat{\chi}$ is rapidly decreasing. In order to obtain results on singularities of $\Tr U(t),$ it suffices to study the
trace of $\tprop(t)$ and its Fourier transform (cf.\ Lemme (IV.1) of \cite{HeRo81}):

\begin{lemma}\label{lem:smooth-trace}
If $\chi \in \CI_c(\RR)$, then $\widetilde{R}(t) = \tprop(t) - U(t)$ is of trace class, and
\begin{align*}
    \big| \Tr \int e^{it\lambda}\chi(t)\widetilde{R}(t) \,dt\,\big| \leq  C_k \left<\lambda \right>^{-N}
\end{align*}
for each $\lambda \in \RR$ and $N>0$.
\end{lemma}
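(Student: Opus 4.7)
The plan is to exploit \eqref{eq:parametrixaccuracy}, which asserts that $\widetilde R(t)\in\CI(\RR_t;\mathcal L(\sob^{-N},\sob^N))$ for every $N$. Combined with property \eqref{limitsob} ($\bigcap_m\sob^m=\schwartz$ and $\bigcup_m\sob^m=\schwartz'$), this says precisely that $\widetilde R(t)\colon \schwartz'\to \schwartz$ with smooth dependence on $t$; equivalently, the Schwartz kernel $K(t,\cdot,\cdot)$ of $\widetilde R(t)$ lies in $\CI(\RR_t;\schwartz(\RR^{2d}))$.

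Next I would show that any operator $A$ with Schwartz kernel in $\schwartz(\RR^{2d})$ is of trace class, with trace norm controlled by a fixed Schwartz seminorm of the kernel. One clean way is to factor $A=(\id+H_0)^{-M}\circ[(\id+H_0)^M A]$ for $M$ sufficiently large: the first factor is trace class on $L^2$ because the eigenvalues of $H_0$ are $j+d/2$ with multiplicity $p(j,d)=O(j^{d-1})$, so its trace $\sum_j p(j,d)(1+j+d/2)^{-M}$ is finite once $M>d$; the second factor is bounded on $L^2$ with norm controlled by a Schwartz seminorm of $K$, since $(\id+H_0)^M$ preserves $\schwartz$ in the $x$-variable. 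Applied to $A=\int\chi(t)\widetilde R(t)\,dt$, this already yields the trace-class conclusion.

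For the rapid decay in $\lambda$, I would integrate by parts in $t$. Writing $e^{it\lambda}=(i\lambda)^{-N}\partial_t^N e^{it\lambda}$ and using $\chi\in\CcI(\RR)$,
\[
\int e^{it\lambda}\chi(t)\widetilde R(t)\,dt = \frac{(-1)^N}{(i\lambda)^N}\int e^{it\lambda}\partial_t^N\bigl[\chi(t)\widetilde R(t)\bigr]\,dt.
\]
Each $\partial_t^N[\chi(t)\widetilde R(t)]$ is a compactly-supported smooth family of operators whose Schwartz kernels are uniformly bounded in the seminorms of $\schwartz(\RR^{2d})$, so the previous step yields a uniform trace-class estimate on the right-hand side, and pulling the trace through the integral gives a bound of order $|\lambda|^{-N}$. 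Combined with the trivial estimate for $|\lambda|\leq 1$, this produces $C_N\langle\lambda\rangle^{-N}$. The only (mild) subtlety is justifying that differentiation and integration in $t$ commute with taking the trace; this is immediate once one observes that \eqref{eq:parametrixaccuracy} actually yields smoothness of $\widetilde R(t)$ as a map into the Banach space of trace-class operators, via the factorization above.
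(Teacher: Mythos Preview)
Your argument is correct and follows essentially the same route as the paper: invoke \eqref{eq:parametrixaccuracy} to obtain $\widetilde R(t)\in\CI(\RR_t;\mathcal L(\sob^{-N},\sob^N))$, deduce that such operators are trace class, and then integrate by parts in $t$ for the rapid decay in $\lambda$. The only difference is one of presentation: the paper simply cites H\"ormander's Lemma~19.3.2 for the fact that operators in $\mathcal L(\sob^{-N},\sob^N)$ are trace class for $N\gg 0$, whereas you spell out an explicit factorization $(\id+H_0)^{-M}\circ[(\id+H_0)^M A]$ (and pass through the Schwartz kernel) to achieve the same end.
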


\begin{proof}
For $N\gg 0$ operators in $\mathcal{L}(\sob^{-N}, \sob^N)$ are of
trace-class (see \cite{Hormander3}*{Lemma 19.3.2}). Using repeated integration by parts, the claim follows from \eqref{eq:parametrixaccuracy}.
\end{proof}

On the other hand, if $\chi \in \CI_c((-\pi/2,\pi/2))$, then the operator
\[
\int \chi(t-2\pi n) \tprop(t) \, dt
\]
also has its Schwartz kernel in $\schwartz(\RR^{2d})$. Replacing $\chi$ with $e^{it\lambda}\chi$, it follows that the trace of $\F_{t \to \lambda}^{-1}  \chi(t-2\pi n) \tprop(t)$ is
\[
 (2\pi)^{-1}  \int e^{it\lambda + i\phi_2(t,x,\eta) -i \left<x,\eta \right> + i\phi_1(t,x,\eta)} \chi(t-2\pi n) b(t,x,x,\eta)\, dt dx d\eta.
\]
In the next section we will evaluate this integral as $\lambda \rightarrow \infty$.

\section{Stationary phase}\label{sec:nonhomog-phase}

In this section we apply the method of stationary phase to evaluate an integral of the form
\begin{equation} \label{eq:mainintegral}
I(\lambda) = \int e^{i(t\lambda+\psi_2(t,x,\eta)+\psi_1(t,x,\eta))} \chi(t) a(t,x,\eta) \, dt dx d\eta
\end{equation}
as $\lambda \rightarrow \infty$, where $\chi \in \CI_c(\RR)$. Letting $(r,\theta)$ denote polar coordinates on $\RR^{2d}$, we will also express various functions of $(x,\eta)$ in terms of $(r,\theta)$. The assumptions are as follows:
\begin{enumerate} \itemsep6pt
	\item $\psi_j(t,\bullet)$ is homogeneous of degree $j$,
	\item $a \in \CI(\RR_t; \symb^0(\RR_x^d))$, and $\psi_j$ are smooth on the support of $a$,
	\item there exists a unique $t_0 \in \supp \chi$ such that $\psi_2(t_0,\bullet)=0$,
	\item there exists a unique $r_0 > 0$ such that $\partial_t \psi_2(t_0,r_0,\theta) = -1$ for all $\theta \in \Sph^{2d-1}$.
\end{enumerate}
\noindent  Define the set where the restriction of $\nabla\psi_1(t_0,\bullet)$ to ${\Sph^{2d-1}}$ vanishes to infinite order,
\begin{align*}
    \Pi_{t_0} = \left\{ \theta \in \Sph^{2d-1}:   \pa_\theta^\alpha (\psi_1(t_0,1,\theta)) = 0 \text{ for all } \alpha \in \NN^{2d-1}\setminus 0 \right\}.
\end{align*}
We can now state our main result on the asymptotics of $I(\lambda)$:

\begin{proposition}\label{prop:osc-int}
	If $\Pi_{t_0}$ has measure zero, then the integral \eqref{eq:mainintegral} satisfies
	\begin{align*}
	I(\lambda) = o(\lambda^{d-1}).
	\end{align*}
If, instead, the restriction of $\psi_1(t_0,\bullet)$ to $\Sph^{2d-1}$ is Morse-Bott with $k>0$ non-degenerate
	directions, then
	\begin{align*}
	I(\lambda) = O(\lambda^{d-1-k/4}).
	\end{align*}
\end{proposition}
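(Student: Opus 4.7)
The plan is a two-stage stationary phase argument: first eliminate the time and radial variables using the fast $\lambda$-scale of the phase, reducing to an oscillatory integral on $\Sph^{2d-1}$ at the slower scale $\lambda^{1/2}$, and then estimate that sphere integral in each of the two hypothesis cases.

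After passing to polar coordinates $(x,\eta) = r\theta$ with $\theta \in \Sph^{2d-1}$ and rescaling $r = \lambda^{1/2}s$, the integral becomes
\begin{equation*}
I(\lambda) = \lambda^d \int e^{i\lambda(t + s^2\psi_2(t,\theta)) + i\lambda^{1/2} s\psi_1(t,\theta)}\, g_\lambda(t,s,\theta)\, dt\, ds\, d\theta,
\end{equation*}
with $g_\lambda(t,s,\theta) = \chi(t)\, a(t,\lambda^{1/2} s\theta)\, s^{2d-1}$. Because $a \in \symb^0$, the chain-rule $\lambda^{1/2}$-factors arising from differentiating $a(t,\lambda^{1/2} s\theta)$ are exactly cancelled by the $\langle(x,\eta)\rangle^{-|\alpha|}$-decay of derivatives of $a$, so $g_\lambda$ is uniformly bounded together with all of its $(t,s,\theta)$-derivatives on $\RR_t \times [\varepsilon,M]_s \times \Sph^{2d-1}$ for any $0 < \varepsilon < M$.

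Writing the phase as $\lambda \Phi_\sigma$ with $\sigma = \lambda^{-1/2}$ and $\Phi_\sigma(t,s,\theta) = t + s^2\psi_2(t,\theta) + \sigma s\psi_1(t,\theta)$, hypotheses (3)--(4) together with the homogeneity of $\partial_t \psi_2$ imply that the unperturbed phase $\Phi_0$ has a unique critical point $(t_0,r_0)$ in $(t,s)$ (independent of $\theta$), with nondegenerate Hessian of signature zero and determinant $-4/r_0^2$. For $\sigma$ small, $\Phi_\sigma$ thus has a nearby critical point $(t_c(\sigma,\theta), s_c(\sigma,\theta)) = (t_0,r_0) + O(\sigma)$, at which a Taylor expansion gives
\begin{equation*}
\lambda \Phi_\sigma(t_c,s_c,\theta) = \lambda t_0 + \lambda^{1/2} r_0 \psi_1(t_0,\theta) + O(1).
\end{equation*}
Stationary phase in $(t,s)$, uniformly in $\theta \in \Sph^{2d-1}$, then yields
\begin{equation*}
I(\lambda) = c\lambda^{d-1} e^{i\lambda t_0} \int_{\Sph^{2d-1}} e^{i\lambda^{1/2} r_0 \psi_1(t_0,\theta)}\, \tilde A(\lambda,\theta)\, d\theta + O(\lambda^{d-2}),
\end{equation*}
with $\tilde A(\lambda,\theta)$ uniformly bounded in $\lambda$ and $\theta$. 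I expect this step to be the main technical hurdle: the two-scale phase forces one to view $\Phi_\sigma$ as a smooth family in $\sigma \in [0,\sigma_0]$, verify Hessian nondegeneracy uniformly in $\sigma$, and control the stationary-phase remainder uniformly before specializing to $\sigma = \lambda^{-1/2}$.

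It remains to estimate the sphere integral $J(\mu) := \int_{\Sph^{2d-1}} e^{i\mu\psi_1(t_0,\theta)}\, \tilde A(\lambda,\theta)\, d\theta$ at $\mu = r_0\lambda^{1/2}$. In the Morse--Bott case, a standard Morse--Bott stationary phase on $\Sph^{2d-1}$ with $k$ nondegenerate normal directions gives $|J(\mu)| = O(\mu^{-k/2}) = O(\lambda^{-k/4})$, yielding $I(\lambda) = O(\lambda^{d-1-k/4})$. In the measure-zero case, given any $\varepsilon > 0$ I would choose $N$ and $\delta$ so that the set $V := \{\theta \in \Sph^{2d-1} : |\partial_\theta^\alpha \psi_1(t_0,1,\theta)| < \delta \text{ for all } 1 \leq |\alpha| \leq N\}$ has measure below $\varepsilon$; this is possible since $V$ decreases to a subset of $\Pi_{t_0}$ as $N \to \infty,\ \delta \to 0$, and $\Pi_{t_0}$ has measure zero by hypothesis. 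Off $V$, a partition of unity combined with a van der Corput type bound of order at most $N$ yields decay $O(\mu^{-1/N})$; the contribution from $V$ is bounded trivially by $C\varepsilon$. Sending $\lambda \to \infty$ and then $\varepsilon \to 0$ shows $J(\mu) = o(1)$, hence $I(\lambda) = o(\lambda^{d-1})$.
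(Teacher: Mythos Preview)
Your approach is essentially identical to the paper's: the same polar-coordinate rescaling $r = \lambda^{1/2}s$, the same stationary phase in $(t,s)$ with $\sigma = \lambda^{-1/2}$ treated as a smooth perturbation parameter, the same Taylor expansion of the critical value yielding the slow phase $\lambda^{1/2}\psi_1(t_0,\theta)$ on the sphere, and the same endgame via van der Corput/measure-zero splitting or Morse--Bott stationary phase.

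Two technical points need more care than you give them. First, after rescaling, the $s$-integral is over $(0,\infty)$, and you only assert uniform amplitude bounds on compact $s$-intervals; you must justify localizing to a neighborhood of $s = r_0$ before applying stationary phase. The paper does this by an explicit integration-by-parts in $(t,r)$ showing the region $\{s \geq 2r_0\}$ contributes $O(\lambda^{-\infty})$. Second, your stated remainder $O(\lambda^{d-2})$ is the one-term stationary-phase error, which does \emph{not} suffice for the Morse--Bott conclusion once $k > 4$: you need to expand to order $M \geq k/4$ (the paper keeps $M$ arbitrary for exactly this reason), and for both this and the subsequent sphere estimate you need uniform bounds on the $\theta$-derivatives of $\tilde A(\lambda,\theta)$, not merely its size.
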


\begin{proof}
	To begin, rewrite the integral \eqref{eq:mainintegral} in polar coordinates, and then make the change of variables $r \mapsto \lambda^{1/2}r$. By homogeneity of the phases,
	\begin{equation} \label{eq:Ipolar}
	I(\lambda) = \lambda^d \int e^{i \lambda(\psi_2(t,r,\theta) + \lambda^{-1/2} \psi_1(t,r,\theta) +t)} \chi(t) a(t,\lambda^{1/2} r,\theta) \, dt\,  r^{2d-1} dr  d\theta.
	\end{equation}
	Observe that the exponential term in this integral can be written as  $\exp(i\lambda\Psi_{\mu})$, where
	\[
	\Psi_\mu(t,r,\theta) = \psi_2(t,r,\theta) + \mu \psi_1(t,r,\theta) + t
	\]
	and $\mu = \lambda^{-1/2}$.
	The proof proceeds in two steps.
	
	\step{Stationary phase in $(t,r)$} First we apply the method of
	stationary phase to the variables $(r,t)$ for $|\mu|$ small, treating $\mu$ and $\theta$ as parameters. Let 
	\[
	C_{\mu} = \{(t,r)\colon d_{r,t}\Psi_\mu(t,r,\theta) = 0\}
	\]
	denote the corresponding stationary set. Now $(r\pa_r)\psi_j=j\psi_j$ by homogeneity of the phases, so the stationary points are where
	\begin{equation} \label{eq:stationarypoints} \begin{cases} 
	2 \psi_2 + \mu \psi_1=0,\\
	\pa_t \psi_2 + \mu \pa_t\psi_1+1=0. \end{cases}
	\end{equation} 
	By hypothesis, if $\theta_0 \in \Sph^{2d-1}$ is fixed and $\mu=0$, then these equations are satisfied on the support of the function $(t,r)\mapsto \chi(t) a(t,\lambda^{1/2}r,\theta_0)$ precisely when $t=t_0,\ r=r_0$. 
	
Using the implicit function theorem,  parametrize $C_{\mu} \cap \supp(\chi \cdot a)$ near $\theta_0$ for small $|\mu|$. Indeed, differentiating the equations \eqref{eq:stationarypoints} in $(t,r)$ at $\mu=0,\, r=r_0, \,t=t_0$  yields the invertible Hessian matrix
	$$
	\begin{pmatrix}
	0 & -2 \\ -2 & \pa_t^2\psi_2
	\end{pmatrix}.
	$$
Denote by $t= t(\mu,\theta)$ and $r = r(\mu,\theta)$ the corresponding critical points. Furthermore, by the implicit function theorem
		$$
	\begin{aligned}
	\begin{pmatrix}
	\pa_\mu r\\ \pa_\mu t
	\end{pmatrix}
=
\frac 14 \begin{pmatrix}
\psi_1 \pa_t^2 \psi_2 + 2 \pa_t \psi_1\\
2 \psi_1
\end{pmatrix}
\end{aligned}
	$$
at $\mu = 0, \, r=r_0, \, t=t_0$. Now Taylor expand $\Psi_\mu(t(\mu,\theta),r(\mu,\theta),\theta)$ at $\mu = 0$ to find that
\[
\Psi_\mu(t(\mu,\theta),r(\mu,\theta),\theta) =t_0+\mu \psi_1(t_0,r_0,\theta)+\mu^2 \gamma(\mu,\theta)
\]
near $\mu =0, \, \theta = \theta_0$, where $\gamma=\gamma(\mu,\theta)$ is a smooth function of $\mu$ and $\theta.$ 

Next, apply the method of stationary phase to the integral
\[
J(\lambda,\mu,\theta) = \lambda^d \int e^{i\lambda\Psi_\mu }\chi(t) a(t,\lambda^{1/2} r,\theta) \, dt\,  r^{2d-1} dr,
\]
treating $\theta \in \Sph^{2d-1}$ and $\mu$ as parameters. In fact, it may be assumed $a(t,\lambda^{1/2}r,\theta)$ has support on $\{r \leq 3r_0\}$. Indeed, consider the following operator, which is well defined on $\{r \geq 2r_0\} \cap \supp \chi$:
\[
L = \lambda^{-1}((\partial_t \phi_2 + 1)^2 + 4\phi_2^2)^{-1} \left(  (\partial_t \phi_2 + 1)\partial_t + 2\phi_2 \partial_r \right).
\]
Due to the symbol bounds on $a$,
\[
|(L^t)^k ( e^{i \lambda^{1/2} \psi_1} \chi(t) a(t,\lambda^{1/2}r,\theta) r^{2d-1} )| \leq C_k \lambda^{-k/2} r^{2d-1-2k}.
\]
Inserting a cutoff to $\{r \geq 2r_0\}$ in the integrand of \eqref{eq:Ipolar} and integrating by parts using $L$ gives a contribution of order $O(\lambda^{-\infty})$. By stationary phase, for any $M \geq 1$,
\[
J(\lambda,\mu,\theta) = \lambda^{d-1} e^{i\lambda (t_0 + i\mu \psi_1(t_0,r_0,\theta))} a_M(\lambda^{1/2},\mu,\theta) + O(\lambda^{d-1-M})
\]
uniformly in $\theta$ for $|\mu|$ sufficiently small; here, $a_M$ is a function depending smoothly on $(\lambda^{1/2},\mu,\theta)$. Note that while successive terms in the stationary
phase expansion involve differentiation of $a(t,\lambda^{1/2} r ,\theta)$
with respect to $r,$ the symbol estimates on $a$ ensure uniform bounds on each $a_M$ as $\lambda\rightarrow \infty$. 

\step{Stationary phase in $\theta$}
Recall that $I(\lambda)$ is the integral of $J(\lambda,\lambda^{-1/2},\theta)$ over $\Sph^{2d-1}$ with respect to $\theta$. In other words, for each $M$,
\begin{align}\label{eq:reduced-int}
I(\lambda) = \lambda^{d-1}e^{i\lambda t_0} \int
e^{i\lambda^{1/2}\psi_1(t_0,r_0,\theta)}
a_M(\lambda^{1/2},\lambda^{-1/2},\theta) \, d\theta + O(\lambda^{d-1-M}).
\end{align}
We now complete the proof of Proposition \ref{prop:osc-int}. If $(x,\eta) \in \Sph^{2d-1} \setminus \Pi_{t_0}$, then there exists an $\alpha \in \NN^{2d-1}$ such that
\begin{align*}
\pa_\theta^\alpha \psi_1 \neq 0,
\end{align*}
in a neighborhood of $(x,\eta)$ within $\Sph^{2d-1}$. By the weak stationary phase lemma for degenerate stationary points \cite{Stein93}*{p.~342, Proposition 5} and a covering argument, the contribution of the integral over $\Sph^{2d-1} \setminus \Pi_{t_0}$ is $o(\lambda^{d-1})$ (cf.~\cite{GuSa88}).
Therefore,
\begin{align*}
I(\lambda) = \lambda^{d-1}e^{i\lambda t_0} \int_{\Pi_{t_0}} e^{i\lambda^{1/2}\psi_1(t_0,r_0,\theta)}
b(\lambda^{1/2},\lambda^{-1/2},\theta) \, d\theta+ o(\lambda^{d-1}).
\end{align*}
This implies that if $\Pi_{t_0}$ is of measure zero then
\begin{align*}
I(\lambda) = o(\lambda^{d-1}),
\end{align*}
which proves the first part of Proposition \ref{prop:osc-int}. For the second part, the condition that $\psi_1(t_0,r_0,\bullet)$ is Morse-Bott with $k$ nondegenerate directions implies that $I(\lambda) = O(\lambda^{d-1-k/4})$ by \cite{Hormander1}*{Theorem 7.7.6}, so taking $M \geq k/4$ finishes the proof.
%
%
\end{proof}

\section{Spectral asymptotics}

\subsection{Singularity at $t=0$} \label{sec:sing-zero}
In this section we calculate the leading order asymptotics of the singularity of $\Tr U(t)$ at $t = 0$. More precisely, we obtain the $\lambda \rightarrow \infty$ behavior of its inverse Fourier transform, after a suitable mollification. For this we use a short-time parametrix for $U(t)$ constructed in \cite{HeRo81}. This construction actually applies to any self-adjoint classical elliptic isotropic operator of order $2$, and for this reason we state Proposition \ref{prop:leading-asymptotic} below quite generally. 

Let $p \in \symbcl^2(\RR^d)$ be real-valued and elliptic, and then set $P = \Op_W(p)$. Denote by $N(\lambda) = \sum_{\lambda_j \leq \lambda} 1$ the counting function for the eigenvalues of $P$.

\begin{proposition}\label{prop:leading-asymptotic}
     Let $\rho \in \schwartz(\RR)$ be such that $\hat \rho$ has compact support in $(-\ep,\ep)$. If $\ep  > 0$ is sufficiently small, then
    \begin{align*}
        (N * \rho)(\lambda) &=
        (2\pi)^{-d}\int_{\{p_2 + p_1 \leq \lambda\} } dxd\eta - (2\pi)^{-d}\int_{\{p_2 = \lambda\} } p_0(x,\eta) \frac{dS}{|\nabla p_2|}\\
        &+ O(\lambda^{d-3/2}).
    \end{align*}
\end{proposition}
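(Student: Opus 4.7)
The plan is to reduce $(N*\rho)(\lambda)$ to a Fourier integral by the spectral theorem, substitute a short-time parametrix for $U(t) = e^{-itP}$, and extract the asymptotics via stationary phase. Writing $(N*\rho)'(\lambda) = (N'*\rho)(\lambda) = (2\pi)^{-1}\int e^{it\lambda}\hat\rho(t)\Tr U(t)\,dt$ and then integrating from $-\infty$ to $\lambda$ recovers $(N*\rho)(\lambda)$; since $\mathrm{supp}\,\hat\rho \subset (-\epsilon, \epsilon)$ for $\epsilon$ small, the asymptotics as $\lambda \to \infty$ are controlled entirely by the singularity of $\Tr U(t)$ at $t = 0$.

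For $|t|$ small, I would invoke the short-time parametrix from \cite{HeRo81}, which realizes $U(t)$ modulo a smoothing remainder as an oscillatory integral
\[
\widetilde U(t)(x,y) = (2\pi)^{-d}\int e^{i(S(t,x,\xi) - y\cdot\xi)}\,a(t,x,\xi)\,d\xi,
\]
with $S$ solving the Hamilton--Jacobi equation $\partial_t S + p_2(x, \partial_x S) = 0$ subject to $S(0,x,\xi) = x\cdot\xi$, and $a \sim \sum_k a_k$ a classical isotropic amplitude whose successive transport equations are driven by $p_1, p_0, \ldots$. Setting $x = y$ and integrating yields an oscillatory integral representation for $\Tr \widetilde U(t)$ in $(x,\xi)$; the smoothing error contributes $O(\lambda^{-\infty})$ to the Fourier integral.

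Substituting into the trace formula and rescaling $(x,\xi) = \lambda^{1/2}(x',\xi')$ exhibits $\lambda$ as a large parameter, since homogeneity of each $p_j$ expands the total phase in non-negative powers of $\lambda^{-1/2}$. Applying stationary phase in $t$ localizes to the critical set where $-\partial_t(S - x\cdot\xi)|_{t=0}$ equals $\lambda$, which to the required accuracy is $\{p_2 + p_1 = \lambda\}$. The leading stationary-phase term, after integrating $\mu$ from $-\infty$ to $\lambda$, produces $(2\pi)^{-d}\Vol(\{p_2 + p_1 \leq \lambda\})$, while the next coefficient $a_1$ of the amplitude, evaluated at the critical point, yields the subprincipal surface contribution $-(2\pi)^{-d}\int_{\{p_2=\lambda\}} p_0\,dS/|\nabla p_2|$. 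The remainder is controlled by the next term of the stationary phase expansion, which is $O(\lambda^{-1/2})$ smaller than the principal $\lambda^{d-1}$ order, giving the $O(\lambda^{d-3/2})$ claimed.

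The main obstacle is careful bookkeeping in the isotropic scale, where $(x,\xi) \sim \lambda^{1/2}$ causes $p_j$ to contribute at order $\lambda^{j/2}$. One must recognize that $p_1$ enters not as an amplitude correction but as a shift of the effective energy surface from $\{p_2 = \lambda\}$ to $\{p_2 + p_1 = \lambda\}$, while $p_0$ enters as a genuine subprincipal amplitude correction; separating these correctly requires identifying the leading transport coefficients of $a$ and tracking which terms of the stationary phase expansion correspond to each level of the classical symbol. The equivalence of $\int_{\{p_2+p_1=\lambda\}} p_0\,dS/|\nabla(p_2+p_1)|$ and $\int_{\{p_2=\lambda\}} p_0\,dS/|\nabla p_2|$ up to $O(\lambda^{d-3/2})$ finally puts the asymptotic formula into the stated form.
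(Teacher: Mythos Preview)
Your overall strategy---reduce to the Helffer--Robert short-time parametrix and analyze the resulting trace integral---is exactly what the paper does, but the execution differs, and two points in your sketch are not quite right and would cause trouble in the details.

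First, in the Helffer--Robert parametrix the phase is not $S$ with $\partial_t S + p_2(x,\partial_x S)=0$; it is $S_2+S_1$ with $S_2$ solving that eikonal for $p_2$ and $S_1$ a homogeneous-of-degree-one correction driven by $p_1$. Thus $p_1$ enters the \emph{phase}, not the amplitude, and this is precisely why the trace phase satisfies $S(t,x,\eta)-\langle x,\eta\rangle = t\psi(t,x,\eta)$ with $\psi(0,\cdot)=-(p_2+p_1)$, giving the volume over $\{p_2+p_1\le\lambda\}$ directly. The $p_0$ surface term does not come from ``the next coefficient $a_1$'': it arises from $\partial_t a^{(0)}(0,x,\eta)$ via the first transport equation, and one must also convert $P$ from Weyl to left quantization, which shifts $p_0$ to $\tilde p_0 = p_0 - (i/2)\langle\partial_x,\partial_\xi\rangle p_2$; an integration by parts then cancels the extra divergence term and leaves exactly $p_0$ on $\{p_2=\lambda\}$.

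Second, rather than rescale and apply stationary phase in $t$ (awkward here since the leading $t$-dependence of the phase is linear), the paper treats $\hat\rho(t)\Tr U_N(t)$ as a conormal distribution at $t=0$. It sets
\[
A(t,\lambda)=(2\pi)^{-d}\hat\rho(t)\int_{\{-\psi(t)\le\lambda\}} a_N(t,x,\eta)\,dxd\eta \in S^d(\RR_t;\RR_\lambda),
\]
writes the trace as $\int e^{-it\lambda}\partial_\lambda A(t,\lambda)\,d\lambda$, and invokes H\"ormander's Lemma 18.2.1 to get $(N'*\rho)(\lambda)=\partial_\lambda B(\lambda)$ with $B(\lambda)=e^{iD_tD_\lambda}A(t,\lambda)\big|_{t=0}$. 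The expansion $B=A(0,\lambda)-i\partial_t\partial_\lambda A(0,\lambda)+S^{d-2}$ then yields the two displayed terms and the $O(\lambda^{d-3/2})$ remainder immediately, with error control built into the symbol calculus. Your rescaling route can be pushed through, but the conormal-symbol argument is cleaner and is how the paper actually proceeds.
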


\begin{proof}
	Let $U(t)$ denote the Schr\"odinger propagator for $P$. As remarked above, we will use a parametrix $U_N(t)$ for $U(t)$ taken from \cite{HeRo81}, which exists on some time interval $(-\ep,\ep)$ (note that $U_N(t)$ differs from the long time parametrix constructed in Corollary \ref{cor:parametrix}). In the notation of \cite{HeRo81},
	\[U_N(t,x,y) = (2\pi)^{-d} \int e^{i(S_2(t,x,\eta) -\left<y,\eta\right>+ S_1(t,x,\eta))} a_N(t,x,\eta)\, d\eta.
	\]
	Here $S_2,S_1$ are appropriate phase functions, and the symbol $a_N$ is a finite sum 
	\[
	a_N(t,x,\eta) = \sum_{k=0}^N a^{(k)}(t,x,\eta),
	\]
	where each $a^{(k)}(t,\bullet)$ is homogeneous of degree $-k$ outside a compact set and vanishes near $(x,\eta) =0$. Note, however, that in \cite{HeRo81} the operator $P$ is the left quantization of $p$ rather than its Weyl quantization. In order to extract the leading order behavior of these quantities, first write 
	\[
	\Op_W(p) = \Op_L(\widetilde{p})
	\] with $\widetilde{p} \in \symbcl^2$ and $\widetilde{p}_j = p_j$ for $j = 1,2$, but
    \begin{equation}\label{eq:change-quantization}
        \widetilde{p}_0 = p_0 - (i/2) \left<\pa_x, \pa_\xi\right> p_2.
    \end{equation}
    Referring to \cite{HeRo81}*{Equations 37-38} for the transport equations satisfied by $a^{(k)}$ and using \eqref{eq:change-quantization}, we find that
	\begin{equation*} 
	 a_N(0,x,\eta) = 1, \quad \pa_t a^{(0)}(0,x,\eta) = -ip_0 -(1/2)\left<\partial_x, \partial_\xi\right> p_2.
	\end{equation*}
    Recalling that $\F_{\lambda \to t} N'(\lambda) = \Tr U(t)$, we have $\F_{\lambda \to t} \{ N' * \rho \} = \hat\rho(t) \Tr U(t)$. Motivated by this, define the distribution $K(t) = \hat{\rho}(t) \Tr U_N(t)$, so that
	\begin{align*}
	K(t) = (2\pi)^{-d}\hat\rho(t) \int e^{i(S_2(t,x,\eta) - \left<x,\eta\right> + S_1(t,x,\eta))} a_N(t,x,\eta) \, dx d\eta.
	\end{align*}
	This makes sense so long as $\hat{\rho}(t)$ has support on the interval where $U_N(t)$ is well defined.
	
    By \cite{HeRo81}*{Equations 35-36}, $S_2(0,x,\eta) = \left< x,\eta \right>$ and $S_1(0,x,\eta) = 0$, so by Taylor's theorem
	\[
	 S_2(t,x,\eta) - \left<x,\eta\right> + S_1(t,x,\eta) = t \psi(t,x,\eta)
	\]
    with $\psi$ a smooth function. More precisely, $\psi$ is given to leading order in $t$ by
	\begin{align*}
	\psi(t,\bullet) &= -({p}_2 + {p}_1) + (t/2) (\left< \pa_\xi {p}_2, \pa_x {p}_2\right> + \left<\pa_\xi p_1, \pa_x {p}_2\right> + \left< \pa_\xi {p}_2, \pa_x {p}_1 \right>)  +t^2 r(t,\bullet).
	\end{align*}
    We now follow the argument of \cite{Hormander4}*{Lemma 29.1.3}. First, define
	\begin{align*}
	A(t,\lambda) = (2\pi)^{-d}\int_{\{-\psi(t) \leq \lambda\} } a_N(t,x,\eta) \hat \rho(t) \,dx d\eta.
	\end{align*}
	Now for sufficiently small $|t|$, the function $-\psi(t,\bullet)$ is elliptic in $\symbcl^2$, and as in the aforementioned lemma 
	\[
	A(t,\lambda) \in S^d(\RR_t; \RR_\lambda)
	\] 
	is a Kohn--Nirenberg symbol for $|t|$ sufficiently small (see \eqref{eq:KN}).  Furthermore, it is an exercise in distribution theory to see that
	\begin{align*}
	K(t) = \int_\RR e^{-it\lambda} \pa_\lambda A(t,\lambda) \,d\lambda.
	\end{align*}
    Thus $K(t)$ is a conormal distribution, which can be written as the Fourier transform of a symbol by applying \cite{Hormander3}*{Lemma 18.2.1}. If we let $B(\lambda) = e^{iD_t D_\lambda} A(t,\lambda)|_{t=0}$ and recall the definition of $K(t)$, then
	\begin{align*}
	\F_{t \to \lambda}^{-1}\{\hat\rho(t) \Tr U_N(t)\}(\lambda) = \pa_\lambda B(\lambda).
	\end{align*}
	Expand $B(\lambda) = A(0,\lambda) - i\partial_t \partial_\lambda A(0,\lambda) + R(\lambda)$, 
	where $R \in S^{d-2}(\RR).$
	Also let $dS$ denote the induced surface measure on $\{p_2 = \lambda\}$. First, 
	\begin{align*}
	A(0,\lambda) = (2\pi)^{-d}\int_{ \{p_2 + p_1 \leq \lambda\} } dxd\eta.
	\end{align*}
	For the next term in the expansion, recall that
        $a_N(0,x,\eta)=1$ and compute
    \begin{align*}
        -i \pa_t A(0,\lambda) &= (2\pi)^{-d} \langle -i\pa_t a, H(\psi + \lambda) \rangle \big\rvert_{t=0} - i(2\pi)^{-d} \langle a\pa_t \psi, \delta(\psi + \lambda)\rangle\big\rvert_{t=0}\\
        &= -(2\pi)^{-d} \langle \tilde{p}_0, H(\lambda - p_2)\rangle - (i/2) (2\pi)^{-d} \langle \left<\pa_x p_2, \pa_\xi p_2\right>, \delta(\lambda - p_2) \rangle +e(\lambda)
    \end{align*}
    for some $e(\lambda) \in S^{d-1/2}(\RR).$ Here $H$ denotes the Heaviside function, and the pairings are in the sense of distributions. Integration by parts furthermore yields
    $$
        \langle  \left<\pa_x p_2, \pa_\xi p_2 \right>, \delta(\lambda-p_2) \rangle = \langle \left<\pa_x, \pa_\xi\right> p_2, H(\lambda-p_2) \rangle.
$$
    Since the pullback of $\delta$ is given by
    $\delta(\lambda - p_2) = |\nabla p_2|^{-1}dS,$  compute from
    \eqref{eq:change-quantization} that
    \begin{align*}
        -i\pa_\lambda\pa_t A(0,\lambda) &= -(2\pi)^{-d} \langle \tilde{p}_0 + (i/2)\left<\pa_x,\pa_\xi\right> p_2, \delta(\lambda - p_2)\rangle + O(\lambda^{d-3/2})\\
        &= -(2\pi)^{-d} \int_{ \{p_2 = \lambda\} } p_0 |\nabla p_2|^{-1} dS + O(\lambda^{d-3/2}).
    \end{align*}
	Finally, for any $k$,
    \begin{align*}
		(N' * \rho)(\lambda) &= \F^{-1}_{t\to\lambda } \{ \hat\rho \Tr U \}(\lambda) \\
		&= \F^{-1}_{t\to \lambda }\{\hat\rho \Tr U_N\}(\lambda) + O(\lambda^{-k}) \\
		&= \pa_{\lambda} B(\lambda) + O(\lambda^{-k}).
    \end{align*}
    provided $N = N(k)$ is sufficiently large (cf.\ Lemma IV.1 in \cite{HeRo81}).
    Integrating this equation gives the desired result.
\end{proof}

\subsection{Proof of Theorem \ref{maintheorem}}
We now return to the setting of Theorem \ref{maintheorem}, so that in Proposition \ref{prop:leading-asymptotic} we take the operator $P = H$. 
Begin by fixing an appropriate cutoff function in the time domain. Choose a real valued function $\rho \in \schwartz(\RR)$ with the following properties:
\begin{enumerate} \itemsep6pt 
	\item $\rho(\lambda) > 0$ for all $\lambda \in \RR$,
	\item $\hat\rho(t) = 1$ on $(-\ep,\ep)$ for some $\ep \in (0,\pi/2)$,
	\item $\supp \hat\rho \subset (-\pi/2,\pi/2)$, 
	\item $\rho$ is even.
\end{enumerate}
In order to compare $N(\lambda)$ with $(N * \rho)(\lambda)$, we will need
the following Fourier Tauberian theorem, from the appendix of
\cite{SaVa97}.  This result is implicit in \cite{DuGu75}, and has its
roots in \cites{Levitan52,Hormander68}.

\begin{lemma}[Theorem B.5.1 in \cite{SaVa97}]\label{thm:tauberian}
    Let $\rho$ be as above, and $\nu \in \RR$. If $(N' * \rho)(\lambda) = O(\lambda^\nu)$ and
    \[
        (N' * \chi)(\lambda) = o(\lambda^\nu)
    \]
    for each function $\chi$ satisfying $\hat\chi \in \CcI(\RR)$, $\supp \hat\chi \subset (0, +\infty)$, then
    \begin{align*}
        N(\lambda) = (N * \rho)(\lambda) + o(\lambda^\nu).
    \end{align*}
\end{lemma}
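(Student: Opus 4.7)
The plan is a classical Fourier--Tauberian argument in two stages: first establish polynomial boundedness of $dN$ from hypothesis (a), then use hypothesis (b) via a Fourier decomposition to upgrade the remainder to $o(\lambda^\nu)$.

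\textbf{Polynomial boundedness and integral representation.} Since $\rho > 0$, choose $c, a > 0$ with $\rho \geq c$ on $[-a, a]$. Monotonicity of $N$ together with hypothesis (a) yields
\[
c\bigl(N(\lambda + a) - N(\lambda - a)\bigr) \leq (N' * \rho)(\lambda) = O(\lambda^\nu),
\]
so $dN$ is a polynomially bounded Radon measure. Differentiating and integrating once in $\lambda$ (with the constant of integration vanishing at $-\infty$), I arrive at the representation
\[
N - N * \rho = K * N', \qquad K(s) = H(s) - (H * \rho)(s),
\]
where $H$ is the Heaviside function. The kernel $K$ is bounded with Schwartz-class decay at $\pm \infty$ (and jumps by one at the origin), and its Fourier transform $\hat K(\tau) = (1 - \hat\rho(\tau))/(i\tau)$ is smooth, odd, vanishes on $(-\ep, \ep)$, and behaves like $1/(i\tau)$ at infinity.

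\textbf{Fourier truncation and one-sided hypothesis.} Fix $R \gg 1$ and a smooth compactly supported cutoff $\gamma$ with $\gamma(0) = 1$, and decompose $K = K^R + K^{>R}$ via $\widehat{K^R} = \hat K \cdot \gamma(\cdot/R)$. By oddness of $\hat K$, split further $\widehat{K^R} = \widehat{K_+^R} + \widehat{K_-^R}$ with supports in $\pm[\ep/2, 2R]$, so that $K_\pm^R$ are Schwartz functions whose Fourier transforms are compactly supported in $\pm(0, +\infty)$. Hypothesis (b) applies directly to $K_+^R$, and the companion $K_-^R$ is handled by conjugate symmetry using evenness of $\rho$; summing gives $(N' * K^R)(\lambda) = o(\lambda^\nu)$ for each fixed $R$. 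For the tail, the asymptotic $\hat K(\tau) \sim 1/(i\tau)$ at infinity together with repeated integration by parts in frequency yields $\|(1 + |\cdot|)^\nu K^{>R}\|_{L^1} \to 0$ as $R \to \infty$, and combined with the polynomial bound on $dN$,
\[
|(N' * K^{>R})(\lambda)| \leq \int |K^{>R}(\lambda - \mu)| \, dN(\mu) \leq C \lambda^\nu \cdot \|(1 + |\cdot|)^\nu K^{>R}\|_{L^1},
\]
which can be made smaller than any $\epsilon \lambda^\nu$ uniformly in $\lambda$ by taking $R$ sufficiently large. Combining the two estimates gives $(K * N')(\lambda) = N(\lambda) - (N * \rho)(\lambda) = o(\lambda^\nu)$.

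\textbf{Main obstacle.} The technical heart is balancing the two pieces: the one-sided decomposition $\hat K = \hat K_+ + \hat K_-$ cannot be realized with compactly supported pieces because $\hat K$ decays only like $1/\tau$, so the truncation at scale $R$ is unavoidable, and the resulting tail $K^{>R}$ must be controlled in a weighted $L^1$ norm uniformly in $\lambda$. This quantitative control --- showing that the $L^1$ norm of $K^{>R}$ weighted by $(1 + |\cdot|)^\nu$ vanishes as $R \to \infty$ --- is where the explicit asymptotic of $\hat K$ and the smoothness of $\gamma$ are essential, and where the polynomial boundedness from the first stage is brought to bear against the one-sided $o(\lambda^\nu)$ estimate from the second.
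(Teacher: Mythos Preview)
The paper does not give its own proof of this lemma; it is quoted as Theorem~B.5.1 of Safarov--Vassiliev \cite{SaVa97} and used as a black box in the derivation of the Weyl law. Your argument is the standard Fourier--Tauberian proof of that result and is essentially correct: the two-stage structure (polynomial bound on $dN$ from positivity of $\rho$, then a band-limited/tail splitting of the kernel $K$) is exactly how such theorems are established.

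Two minor points of polish. First, the reduction from $K_-^R$ to $K_+^R$ is most cleanly justified by the reality of $K$: since $K=H-H*\rho$ is real-valued one has $K_-^R=\overline{K_+^R}$, hence $N'*K_-^R=\overline{N'*K_+^R}$ and the $o(\lambda^\nu)$ bound transfers immediately. Oddness of $\hat K$ and evenness of $\rho$ are incidental here. Second, for the tail it is cleaner to observe the scaling identity $K^{>R}(s)=\Phi(Rs)$, where $\Phi=\mathcal F^{-1}\bigl((1-\gamma)/(\,i\tau)\bigr)$ is bounded with $O(|s|^{-\infty})$ decay (since every derivative of $(1-\gamma(\tau))/(i\tau)$ lies in $L^1$); a change of variables then gives $\|(1+|\cdot|)^{|\nu|}K^{>R}\|_{L^1}=O(1/R)$ directly. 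Note the exponent should be $|\nu|$ rather than $\nu$, coming from Peetre's inequality $(1+|\lambda-s|)^\nu\le (1+|\lambda|)^\nu(1+|s|)^{|\nu|}$, so that the stated range $\nu\in\RR$ is covered.
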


In order to prove Theorem \ref{maintheorem} it suffices to establish
\eqref{eq:infiniteorder} and \eqref{eq:morsebott}, since then the Weyl
law \eqref{eq:twotermweyl} is an immediate corollary of Lemma
\ref{thm:tauberian}. Indeed, using Proposition \ref{prop:wavetrace}
and a suitable partition of unity, either of the conclusions
\eqref{eq:infiniteorder} or \eqref{eq:morsebott} implies that
\begin{align*}
\F_{t \to \lambda}^{-1} \{\chi(t)\Tr U(t)\}(\lambda) = o(\lambda^{d-1}).
\end{align*}
for any function $\chi \in \CI_c(\RR)$ with $\supp \chi \subset  (0,\infty)$ (here $\chi$ is playing the role of $\hat \chi$ in Lemma \ref{thm:tauberian}). Now Proposition \ref{prop:leading-asymptotic} in particular shows that
\[
(N * \rho)(\lambda) = O(\lambda^d),
\]
which together verify the hypotheses of Lemma \ref{thm:tauberian}. This establishes the two term asymptotics \eqref{eq:twotermweyl} for $N(\lambda)$.

Thus, we aim to show \begin{align*}
\F_{t \to \lambda}^{-1} \{\chi(t)\Tr U(t)\}(\lambda) = o(\lambda^{d-1})
\end{align*}
 whenever $\supp \chi \subset (2\pi n - \ep, 2\pi n + \ep)$, where $n \in \NN \setminus 0$ and $\ep \in (0,\pi/2)$. By Lemma \ref{lem:smooth-trace}, for any $N > 0$
 \begin{align*}
 \F_{t \to \lambda}^{-1}\{\chi(t)\Tr U(t)\}(\lambda) &= \F_{t \to \lambda}^{-1}\{\chi(t)\Tr \tprop(t)\}(\lambda) + O(\lambda^{-N}).
 \end{align*}
Now use Corollary \ref{cor:parametrix} to see that
 \begin{align*}
\F_{t \to \lambda}^{-1}\{\chi(t)\Tr \tprop(t)\}(\lambda) = \int e^{it\lambda}e^{i(\phi_2(t,x,\eta) - \left< x,\eta\right> + \phi_1(t,x,\eta))} \chi(t) a(t,x,\eta) \,dtdxd\eta.
 \end{align*}
Apply Proposition \ref{prop:osc-int} with
\[
\psi_2(t,x,\eta) = \phi_2(t,x,\eta) - \left<x,\eta\right>, \quad \psi_1(t,x,\eta) = \phi_1(t,x,\eta).
\]
Since $\phi_2(t,x,\eta) = \sec(t) (x\eta - \sin(t)(|x|^2 + |\eta|^2)/2)$ and $\chi$ is supported close to $2\pi n$, the hypotheses of Proposition \ref{prop:osc-int} for the phases $\psi_2,\psi_1$ and symbol $a$ are satisfied. Indeed, in the notation of the latter proposition, we take 
\[
t_0 = 2\pi n, \quad r_0 = \sqrt{2}.
\]
Now suppose that the restriction of $\nabla \Xray p_1$ to ${\Sph^{2d-1}}$
vanishes to infinite order only on a set of measure zero. Then
$\nabla \phi_1(2\pi n,\bullet) = -\nabla \Xray ^np_1 = -n \nabla \Xray p_1$,
so $\nabla \phi_1(2\pi n,\bullet)$ vanishes to infinite order
only on a set of measure zero in $\Sph^{2d-1}$ as soon as $n\neq 0$. In that case Proposition
\ref{prop:osc-int} shows that
\[
\F_{t \to \lambda}^{-1}\{\chi(t)\Tr \tprop(t)\}(\lambda) = o(\lambda^{d-1}).
\] 
Similarly, if the restriction of $\Xray p_1$ to $\Sph^{2d-1}$ is Morse--Bott with $k > 0$ nondegenerate directions, then $\phi_1(2\pi n,\bullet)$ has the same property for $n\neq 0$. This completes the proof of Theorem \ref{maintheorem}.\qed
 
\begin{bibdiv}
\begin{biblist}
\bibselect{biblist}
\end{biblist}
\end{bibdiv}

\end{document}